\theoremstyle{definition}
\newtheorem{thm}{Theorem}[section]
\newtheorem{crl}[thm]{Corollary}
\newtheorem{prp}[thm]{Proposition}
\newtheorem{lmm}[thm]{Lemma}
\newtheorem{rmk}[thm]{Remark}
\newtheorem{dfn}[thm]{Definition}
\newtheorem{exm}[thm]{Example}
\newtheorem*{thm*}{Theorem}
\newtheorem*{cnj*}{Conjecture}
\newcommand{\LR}{\Leftrightarrow}
\newcommand{\NN}{\mathbb N}
\newcommand{\ZZ}{\mathbb Z}
\newcommand{\QQ}{\mathbb Q}
\newcommand{\CC}{\mathbb C}
\title{
A note on the second supplementary law of rational power residue symbols
}
\author{Yoshinosuke HIRAKAWA\thanks{Graduate School of Sciences and Technology for Innovation, Yamaguchi University, 1677-1, Yoshida, Yamaguchi, 753-8512, Japan,
\protect\url{yhirakawa@yamaguchi-u.ac.jp}} 
\and Tomokazu KASHIO\thanks{Department of Mathematics, Faculty of Science and Technology, Tokyo University of Science, Noda, Chiba 278-8510, Japan,
\protect\url{tomokazu_kashio@rs.tus.ac.jp}} 
\and Ryutaro SEKIGAWA\thanks{Division of Science,
School of Science and Engineering, 
Tokyo Denki University, 
Hatoyama, Saitama, 350-0394, Japan, 
\protect\url{sekigawa_ryutaro@mail.dendai.ac.jp}} 
\and Naoaki TAKADA\thanks{\protect\url{6122517@alumni.tus.ac.jp}}
\and Shuji YAMAMOTO\thanks{Interdisciplinary Faculty of Science and Engineering, Shimane University,
1060 Nishi-Kawatsu, Matsue, Shimane, 690-8504, Japan, \protect\url{yamashu@riko.shimane-u.ac.jp}}}
\newcommand{\subjclass}[2][2020]{%
  \let\@oldtitle\@title%
  \gdef\@title{\@oldtitle\footnotetext{#1 \emph{Mathematics subject classification(s).} #2}}%
}
\newcommand{\keywords}[1]{%
  \let\@@oldtitle\@title%
  \gdef\@title{\@@oldtitle\footnotetext{\emph{Key words and phrases.} #1.}}%
}
\subjclass{
11A15, 
11G55,
11R11, 
11R16, 
11R18, 
11S80
}
\keywords{power residue symbol, polylogarithm, Kummer theory, second supplementary law of quadratic reciprocity, Euler's conjecture for cubic residue, $p$-adic logarithmic function}
\begin{document}

\maketitle

\begin{abstract}
As a natural generalization of the Legendre symbol, the $q$-th power residue symbol $(a/p)_q$ is defined for primes $p$ and $q$ with $p\equiv 1 \bmod q$. 
In this paper, we generalize the second supplementary law by providing an explicit condition for $(q/p)_q = 1$, when $p$ has a special form $p = \sum_{i=0}^{q-1} m^i n^{q-1-i}$.
This condition is expressed in terms of the polylogarithm $\mathrm{Li}_{1-q}(x)$ of negative index.
Our proof relies on an argument similar to Lemmermeyer's proof of Euler's conjectures for cubic residue.
\end{abstract}

\section{Main results} \label{s1}

The rational $q$-th power residue symbol is defined by 
\begin{align*}
\left( \frac{a}{p} \right)_q:=
\begin{cases}
1 & \text{if there exists $b\in \ZZ$ satisfying $a \equiv b^q \bmod p$}, \\
-1 & \text{otherwise},
\end{cases}
\end{align*}
where $a\in \ZZ$, $q  \in \NN$ with $q \geq 2$ and $p$ is a prime with $p\equiv 1 \mod q $.
This paper deals with the case where $q$ is a prime number.
When $q =2$, the second supplementary law of quadratic reciprocity states that $( \frac{2}{p})_2$ depends only on $p\bmod 2^3$.
More precisely we have:
\begin{thm*}
For odd primes $p$, the following are equivalent.
\begin{enumerate}
\item $( \frac{2}{p} )_2=1$.
\item $p \bmod 2^3 \in \{\pm 1 \bmod 2^3\}$.
\end{enumerate}
\end{thm*}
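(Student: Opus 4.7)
The plan is to combine Euler's criterion with a Frobenius computation inside the cyclotomic ring $\ZZ[\zeta_{8}]$, which fits naturally into the Kummer-theoretic framework that the paper will go on to exploit for larger $q$. First I would invoke Euler's criterion, which says that for an odd prime $p$,
$$\left(\frac{2}{p}\right)_2 \equiv 2^{(p-1)/2} \pmod{p},$$
so the question reduces to determining the sign of $2^{(p-1)/2} \bmod p$ inside $\{\pm 1\}$.

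Next I would exploit the identity $(\zeta_8 + \zeta_8^{-1})^2 = 2$. Fix a primitive $8$-th root of unity $\zeta_8$, fix a prime $\mathfrak{p}$ of $\ZZ[\zeta_8]$ above $p$, and set $\alpha := \zeta_8 + \zeta_8^{-1}$. On one hand,
$$\alpha^p = \alpha \cdot (\alpha^2)^{(p-1)/2} = 2^{(p-1)/2}\,\alpha.$$
On the other hand, the Frobenius congruence $(x+y)^p \equiv x^p + y^p \pmod{p}$ applied in $\ZZ[\zeta_8]$ yields
$$\alpha^p \equiv \zeta_8^p + \zeta_8^{-p} \pmod{\mathfrak{p}}.$$

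A direct case analysis on $p \bmod 8 \in \{1,3,5,7\}$ then shows $\zeta_8^p + \zeta_8^{-p} = \varepsilon_p\,\alpha$ with $\varepsilon_p = +1$ exactly when $p \equiv \pm 1 \pmod 8$ and $\varepsilon_p = -1$ when $p \equiv \pm 3 \pmod 8$, using only $\zeta_8^4 = -1$. Combining the two expressions for $\alpha^p$ and cancelling the factor $\alpha$ (which is a unit modulo $\mathfrak{p}$ because $\alpha^2 = 2$ and $p$ is odd), I would conclude
$$2^{(p-1)/2} \equiv \varepsilon_p \pmod{\mathfrak{p}}.$$
Since both sides lie in $\ZZ$ and $\mathfrak{p}\cap\ZZ = p\ZZ$, the congruence descends to $\ZZ/p\ZZ$, and Euler's criterion delivers the claimed equivalence.

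There is no real obstacle here: the only delicate point is the passage from the congruence modulo $\mathfrak{p}$ back to one modulo $p$, which is routine, and the small bookkeeping involved in the four residue classes of $p$ modulo $8$. I would expect this exact template, with $\zeta_8$ replaced by a suitable $q^{2}$-th root of unity and $\alpha$ by a Kummer-theoretic analogue, to guide the generalisation to arbitrary prime $q$ later in the paper.
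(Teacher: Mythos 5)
Your argument is correct: the identity $(\zeta_8+\zeta_8^{-1})^2=2$, the Frobenius congruence $\alpha^p\equiv\zeta_8^p+\zeta_8^{-p}\bmod \mathfrak p$, the four-case verification that $\zeta_8^p+\zeta_8^{-p}=\varepsilon_p(\zeta_8+\zeta_8^{-1})$ with $\varepsilon_p=1$ exactly for $p\equiv\pm1\bmod 8$, the cancellation of $\alpha$ (a unit modulo $\mathfrak p$ since $\alpha^2=2$ and $p$ is odd), and the descent from $\mathfrak p$ to $p\ZZ$ are all sound, and Euler's criterion closes the loop. There is nothing in the paper to compare against line by line: this statement is quoted as the classical second supplementary law, without proof, purely as motivation for the generalization. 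One remark on your closing speculation, though: the paper's mechanism for odd $q$ runs in the opposite direction from the template you describe. Rather than adjoining roots of unity of $q$-power order and tracking the Frobenius at $p$, the paper takes the degree-$q$ subfield $K$ of $\QQ(\zeta_p)$, realizes $K(\zeta_q)$ as a Kummer extension $\QQ(\zeta_q)(\sqrt[q]{\mu})$ with an explicit $\mu\in\ZZ[\zeta_q]$, and tests whether the prime $(1-\zeta_q)$ splits there, i.e.\ whether $\mu$ is a $q$-th power modulo $(1-\zeta_q)^{q+1}$. For $q=2$ your cyclotomic Frobenius computation and that splitting criterion are two faces of the same reciprocity, but the paper's invariants (the polynomial $f_q$ and the polylogarithm $\mathrm{Li}_{1-q}$) grow out of the Kummer-theoretic side, not out of a Frobenius calculation in a larger cyclotomic field.
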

As a generalization, it is natural to ask when $( \frac{q}{p} )_q=1$ for primes $q\geq 3$.
In this paper we provide an answer to this question under the assumption that the given prime $p$ is of the form
\begin{align*}
p=\sum_{i=0}^{q-1} m^in^{q-1-i}=\Phi_q(n/m)m^{q-1}, \ m,n \in \ZZ, \ p\neq q,
\end{align*}
where $\Phi_q(x)=\sum_{i=0}^{q-1} x^i$ denotes the $q$-th cyclotomic polynomial.
Under this assumption, an equivalent condition of  $( \frac{q}{p} )_q=1$ is given in terms of the polylogarithm of negative index
\begin{align*}
\mathrm{Li}_{1-q}(x):=\left(x\frac{d}{dx}\right)^{q-1} \frac{x}{1-x} \in \QQ(x).
\end{align*}
The following is the main result in this paper.
We denote by $\ZZ_{(q)}$ the localization of $\ZZ$ at the prime ideal $q\ZZ$.

\begin{thm} \label{prsvspl}
Let $m,n \in \ZZ$, and assume that $p:=\Phi_q(n/m)m^{q-1}$ is a prime other than $q$. 
The following are equivalent.
\begin{enumerate}
\item $( \frac{q}{p} )_q=1$.
\item $\mathrm{Li}_{1-q}(\frac{n}{m}) \in q^2\ZZ_{(q)}$.
\end{enumerate}
\end{thm}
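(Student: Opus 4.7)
The plan is to work in the cyclotomic ring $O := \ZZ[\zeta_q]$ and reduce the problem to an explicit $q$-adic computation at the prime $\lambda := 1-\zeta_q$ above $q$. Using $\Phi_q(x) = \prod_{i=1}^{q-1}(x - \zeta_q^i)$, factor $p = N_{\QQ(\zeta_q)/\QQ}(\pi)$ with $\pi := n - \zeta_q m$; since $p \neq q$ is prime, $\pi$ generates a prime of $O$ with residue field $\mathbb{F}_p$, and $n \equiv \zeta_q m \pmod\pi$. Consequently condition (1) is equivalent to $(q/\pi)_q = 1$ for the full $q$-th power residue symbol in $O$, since both conditions amount to $q^{(p-1)/q} \equiv 1 \pmod p$.

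The factorization $q = u\,\lambda^{q-1}$ in $O$, where $u = \prod_{i=1}^{q-1}(1-\zeta_q^i)/(1-\zeta_q)$ is a cyclotomic unit, decomposes the symbol as
\begin{equation*}
(q/\pi)_q = (u/\pi)_q \cdot (\lambda/\pi)_q^{\,q-1}.
\end{equation*}
The unit factor is controlled by $\pi \bmod \lambda$ (equivalently by $n-m \bmod q$, which our hypothesis forces to be nonzero, since $n \equiv m \pmod q$ would give $p \equiv 0 \pmod{q^2}$). For the supplementary symbol $(\lambda/\pi)_q$, I first replace $\pi$ by a primary associate $\pi^* := \zeta_q^{\,m/(n-m)}\pi$, which by a direct $\lambda$-adic calculation satisfies $\pi^* \equiv n-m \pmod{\lambda^2}$. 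The supplementary reciprocity law for $\lambda$ (relative to primary primes) then writes $(\lambda/\pi^*)_q = \zeta_q^c$ for a $q$-adic invariant $c$ built from a logarithm of $\pi^*$; this plays, for general $q$, the role that $\pi \bmod 2$ plays in Lemmermeyer's treatment of the cubic case.

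The heart of the proof is to evaluate this invariant explicitly and identify it with $\mathrm{Li}_{1-q}(n/m)$ modulo $q^2$. Substituting $\pi = m(n/m - \zeta_q)$ and expanding the logarithmic contribution formally, the trace from $\QQ_q(\zeta_q)$ to $\QQ_q$ reorganizes, via the character-orthogonality identity $\sum_{i=1}^{q-1}\zeta_q^{ij} = q\,\mathbf{1}_{q\mid j}-1$, into a single power series in $n/m$. By Fermat's little theorem applied termwise, its coefficients are the power sums $k^{q-1}$ modulo $q$, which is precisely the formal definition $\mathrm{Li}_{1-q}(x) = \sum_{k \ge 1} k^{q-1} x^k$. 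Lifting the mod-$q$ match to a mod-$q^2$ identification needs the Fermat-quotient refinement $n^{q-1} \equiv 1 + q\,\phi_q(n) \pmod{q^2}$ together with the auxiliary rational-function congruence $\mathrm{Li}_{1-q}(x) \equiv \frac{x}{1-x} - \frac{x^q}{1-x^q} \pmod q$; combining these with the factorization $p = \Phi_q(n/m)\,m^{q-1}$ produces the asserted equivalence.

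The main obstacle is this last identification modulo $q^2$. $\mathrm{Li}_{1-q}(x) = A_{q-1}(x)/(1-x)^q$ is a closed rational function (numerator an Eulerian polynomial), whereas the trace arising from the supplementary law is naturally an infinite formal $q$-adic series, so matching them to the required precision is a delicate calculation. This is exactly the technical core that parallels Lemmermeyer's proof of Euler's cubic-residue conjecture: in both one controls the primary associate of $\pi$ modulo successive powers of $\lambda$ and pushes Eisenstein-style congruences through to the necessary precision.
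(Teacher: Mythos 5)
Your strategy is genuinely different from the paper's, and as written it has a real gap. The paper never invokes a reciprocity law for $q$-th power residue symbols: it constructs the unique degree-$q$ subfield $K\subset\QQ(\zeta_p)$ as an explicit Kummer extension $K(\zeta_q)=\QQ(\zeta_q)(\sqrt[q]{\mu^{(i)}})$ with $\mu^{(i)}=\zeta_q^i\prod_{j=1}^{q-1}(m-n\zeta_q^j)^{j^*}$ (the twist by $j^*$ is what makes the extension abelian over $\QQ$), reduces $(\frac{q}{p})_q=1$ to the splitting of $(1-\zeta_q)$ in this extension, i.e.\ to $\mu^{(i)}$ being a $q$-th power modulo $(1-\zeta_q)^{q+1}$, and then computes $\log_q\mu^{(i)}\equiv(1-\zeta_q)f_q(\frac{-n}{m-n})$ to that precision; the link to $\mathrm{Li}_{1-q}(n/m)$ is a separate polynomial identity plus Lerch's Fermat-quotient congruence. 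You instead propose Eisenstein-style reciprocity for the single prime $\pi=n-\zeta_q m$, splitting $(q/\pi)_q=(u/\pi)_q(\lambda/\pi)_q^{q-1}$ and appealing to the supplementary laws. That route is classical and in principle viable, but it needs the explicit Artin--Hasse/Kummer supplementary formulas, which you never state.

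Two concrete problems. First, the claim that the unit factor $(u/\pi)_q$ ``is controlled by $\pi\bmod\lambda$'' is false: for a unit $u$, Hilbert's product formula reduces $(u/\pi)_q$ to the Hilbert symbol at $\lambda$, which depends on the primary associate of $\pi$ modulo $\lambda^{q+1}$ --- exactly the same precision as the $(\lambda/\pi)_q$ term you do analyze. Discarding it in one line drops a contribution of the same order as the one you keep, and there is no reason for it to be constant on the family of $\pi$'s under consideration. Second, the entire content of the theorem lives at the level of $q^2$: modulo $q$ one has $\mathrm{Li}_{1-q}(x)\equiv\sum_{q\nmid k}x^k$ and $f_q(c)\equiv c-c^q\equiv 0$, so every candidate ``passes'' mod $q$ and the equivalence is vacuous there. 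Your proposal identifies the relevant invariant with $\mathrm{Li}_{1-q}(n/m)$ only modulo $q$ and then explicitly defers the mod-$q^2$ lift as ``the main obstacle.'' Since that lift (the analogue of the paper's computation of $\log_q\mu^{(i)}$ modulo $(1-\zeta_q)^{q+1}$ in Section 4, combined with the Lerch congruence in Section 5) is precisely where the theorem is proved, what you have is a plausible program rather than a proof.
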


\begin{rmk}
The assumption ``$p=\Phi_q(n/m)m^{q-1}$ is a prime other than $q$'' implies 
\begin{align} \label{equiv1}
m-n \not\equiv 0 \mod q \quad \text{and} \quad p \equiv 1 \mod q,
\end{align}
which follows from the congruence 
\begin{align*}
p=\sum_{k=0}^{q-1} m^{q-1-k} n^k\equiv \sum_{k=0}^{q-1} (-1)^k\binom{q-1}{k} m^{q-1-k} n^k=(m-n)^{q-1} \mod q.
\end{align*}
In particular $1 \neq \frac{n}{m} \in \QQ$.
On the other hand, the polylogarithm $\mathrm{Li}_{1-q}(x)$ takes the form of a polynomial of degree $q-1$ divided by $(1-x)^q$.
Hence $\mathrm{Li}_{1-q}(\frac{n}{m}) \in \QQ$ under the same assumption.
\end{rmk}

\cref{prsvspl} immediately follows by combining Theorems \ref{mainthm} and \ref{main2} below. 

\begin{dfn} \label{dfn}
We define a polynomial $f_q(x)$ of degree $q$ and a subset $S_q \subset \ZZ/q^2\ZZ$ by
\begin{align*}
&f_q(x):=
\sum_{t=1}^{q} \left(\sum_{\substack{1\leq j,k,l \leq q-1 \\ k \equiv jl \mod q}} (-1)^k\binom{t}{k}jl \right) \frac{x^t}{t}  \in \ZZ_{(q)}[x], \\
&S_q:=\left\{a-f_q(a)\bmod q^2 \in \ZZ/q^2\ZZ \mid a=0,1,\dots,q-1\right\}.
\end{align*}
\end{dfn}

\begin{thm} \label{mainthm}
Let $m,n \in \ZZ$, and assume that $p:=\Phi_q(n/m)m^{q-1}$ is a prime other than $q$. 
The following are equivalent.
\begin{enumerate}
\item $( \frac{q}{p} )_q=1$.
\item $f_q(\frac{-n}{m-n}) \in q^2\ZZ_{(q)}$.
\item $\frac{-n}{m-n} \bmod q^2 \in S_q$.
\end{enumerate}
\end{thm}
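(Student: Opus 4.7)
The plan is to split the proof into two parts: the equivalence $(2) \Leftrightarrow (3)$, which is an elementary Taylor-expansion argument, and the equivalence $(1) \Leftrightarrow (2)$, which is the analytic/number-theoretic core and is where Lemmermeyer's style of argument enters.

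For $(2) \Leftrightarrow (3)$: set $x := -n/(m-n)$, which lies in $\ZZ_{(q)}$ since $m - n \not\equiv 0 \bmod q$, and let $a \in \{0, 1, \dots, q-1\}$ be its reduction modulo $q$. Writing $x = a + q\gamma$ with $\gamma \in \ZZ_{(q)}$ and using $f_q \in \ZZ_{(q)}[x]$, a Taylor expansion gives
\begin{align*}
f_q(x) \equiv f_q(a) + (x - a) f_q'(a) \bmod q^2.
\end{align*}
The argument then hinges on the two congruences $f_q(a) \equiv 0 \bmod q$ and $f_q'(a) \equiv 1 \bmod q$ for every integer $a$. Both can be verified by direct computation: rewriting the inner sum defining each coefficient using $jl \equiv k \bmod q$, and invoking $\sum_{k=0}^t (-1)^k k \binom{t}{k} = 0$ for $t \ge 2$ together with $\binom{q}{k}/q \equiv (-1)^{k-1}/k \bmod q$ to handle the $t = q$ term, one finds that the coefficient of $x^t/t$ in $f_q$ is $\equiv 0 \bmod q$ for $2 \le t \le q-1$, and that the residues at $t = 1$ and $t = q$ combine to produce precisely these two congruences. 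Given them, $(x - a) f_q'(a) \equiv x - a \bmod q^2$, so condition $(2)$ becomes $x \equiv a - f_q(a) \bmod q^2$, which is exactly $(3)$ (the representative $a$ being uniquely determined by $a \equiv x \bmod q$, using $f_q(a) \equiv 0 \bmod q$ to rule out other choices in the set $S_q$).

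For $(1) \Leftrightarrow (2)$: let $\zeta = \zeta_q$, and factor $p = \prod_{j=1}^{q-1}(n - \zeta^j m)$ in $\ZZ[\zeta]$. Fix the prime $\mathfrak{p} = (n - \zeta m)$ above $p$, so that the residue field isomorphism $\ZZ[\zeta]/\mathfrak{p} \cong \mathbb{F}_p$ sends $\zeta \mapsto n/m \bmod p$. Combining with the cyclotomic identity $q = \prod_{j=1}^{q-1}(1 - \zeta^j)$ yields
\begin{align*}
q \equiv \prod_{j=1}^{q-1}\bigl(1 - (n/m)^j\bigr) \bmod p.
\end{align*}
Since $(q/p)_q = 1 \iff q^{(p-1)/q} \equiv 1 \bmod p$, the task reduces to computing the $(p-1)/q$-th power of this product and comparing with $1$ in $\mathbb{F}_p$.

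The main obstacle is the explicit combinatorial calculation linking this multiplicative congruence to the additive condition $f_q(-n/(m-n)) \in q^2 \ZZ_{(q)}$. My plan is to (i) use $p \equiv (m-n)^{q-1} \bmod q$ to rewrite the exponent $(p-1)/q$ modulo appropriate powers of $q$ via a Fermat-quotient expression; (ii) expand each $(1 - (n/m)^j)^{(p-1)/q}$ by the binomial theorem and perform the substitution $n/m = x/(x-1)$, i.e.\ $x = -n/(m-n)$, which makes $x$ the natural additive coordinate measuring the perturbation at $q$; and (iii) collect surviving terms modulo $q^2$ and reindex so that the triple $(j, k, l)$ with $k \equiv jl \bmod q$ in the definition of $f_q$ appears, with $k$ from the binomial exponent, $j$ from the product index over the factorization of $q$, and $l$ from rewriting $(n/m)^{jl}$ using the $q$-th power relations in $\mathbb{F}_p^\times$. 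Step (iii) is the delicate one: identifying exactly which terms survive modulo $q^2$ requires careful use of $\zeta^q = 1$ and of the $q$-adic valuation of $1 - \zeta$. An argument in the style of Lemmermeyer's proof of Euler's cubic conjecture, which for $q = 3$ carries out precisely such a bookkeeping, should generalize.
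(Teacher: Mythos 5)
Your split into $(2)\Leftrightarrow(3)$ and $(1)\Leftrightarrow(2)$ matches the paper's, and the first half is essentially correct: the paper proves $f_q(x)\equiv x-x^q\bmod q$ (hence $f_q(a)\equiv 0$ and $f_q'(a)\equiv 1\bmod q$ for all $a\in\ZZ_{(q)}$) by the same kind of binomial bookkeeping you describe, and then runs the same Taylor-expansion/Hensel argument to identify $S_q$ with the zero set of $f_q$ modulo $q^2$.

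The gap is in $(1)\Leftrightarrow(2)$. Your reduction to deciding whether $\prod_{j=1}^{q-1}\bigl(1-(n/m)^j\bigr)^{(p-1)/q}\equiv 1\bmod p$ is correct, but this is precisely the evaluation of the $q$-th power residue symbols $\bigl(\tfrac{1-\zeta_q^j}{\mathfrak p}\bigr)_q$, i.e.\ of which $q$-th root of unity in $\mathbb{F}_p^\times$ each factor equals; no binomial expansion inside $\mathbb{F}_p$ will determine this, because the answer depends on the discrete logarithm of $1-(n/m)^j$ modulo $q$-th powers, not on any manageable truncation of a sum with $\sim p/q$ terms. Moreover, steps (i)--(iii) conflate two incompatible completions: the computation you set up lives in $\mathbb{F}_p$, where ``collect surviving terms modulo $q^2$'' has no meaning and the exponent $(p-1)/q$ only matters modulo $p-1$, whereas the target condition $f_q(\tfrac{-n}{m-n})\in q^2\ZZ_{(q)}$ is $q$-adic. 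Converting the global-at-$p$ condition into a local-at-$q$ one is exactly the content of a reciprocity law, and that mechanism is absent from your outline. The paper supplies it by Kummer theory: it identifies $K(\zeta_q)=\QQ(\zeta_q)(\sqrt[q]{\mu^{(i)}})$ for the explicit element $\mu^{(i)}=\zeta_q^i\prod_{j}(m-n\zeta_q^j)^{j^*}$, invokes the decomposition law for Kummer extensions to translate ``$q$ splits in $K$'' into ``$\mu^{(i)}$ is a $q$-th power modulo $(1-\zeta_q)^{q+1}$'', and only then performs a $q$-adic computation (via $\log_q\mu^{(i)}\equiv(1-\zeta_q)f_q(\tfrac{-n}{m-n})\bmod (1-\zeta_q)^{q+1}$) in which ``mod $q^2$'' is meaningful. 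Note also that Lemmermeyer's proof of Euler's cubic conjectures, which you cite as the model for your step (iii), is this Kummer-theoretic argument rather than an $\mathbb{F}_p$-side expansion, so the precedent you invoke does not support the route you propose.
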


A proof of \cref{mainthm} is given in Sections \ref{ke} through \ref{calc}.
The polynomial $f_q(x)$ has a connection to the polylogarithm $\mathrm{Li}_{1-q}(x)$ as follows.

\begin{thm} \label{main2}
\begin{enumerate}
\item We have
\begin{align*}
f_q(x) \equiv c_q \mathrm{Li}_{1-q}\left(\frac{-x}{1-x}\right) \mod q^2,
\end{align*}
where $c_q$ is an integer prime to $q$.
\item Let $m,n \in \ZZ$, and assume that $m-n \not \equiv 0 \mod q$.
The following are equivalent.
\begin{enumerate}
\item $\mathrm{Li}_{1-q}(\frac{n}{m}) \in q^2\ZZ_{(q)}$.
\item $f_q(\frac{-n}{m-n}) \in q^2\ZZ_{(q)}$.
\end{enumerate}
\end{enumerate}
\end{thm}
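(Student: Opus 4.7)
\emph{Proof plan.}
Assertion~(2) will follow at once from~(1). Setting $x_0:=-n/(m-n)$, the assumption $m-n\not\equiv 0\bmod q$ guarantees $x_0\in\ZZ_{(q)}$, and one checks $-x_0/(1-x_0)=n/m$, so evaluating the polynomial congruence of~(1) at $x=x_0$ yields $f_q(x_0)\equiv c_q\,\mathrm{Li}_{1-q}(n/m) \bmod q^2$; since $c_q$ is a unit in $\ZZ_{(q)}/q^2\ZZ_{(q)}$, this makes the two $q^2$-divisibilities equivalent.

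The real work is~(1). My plan is to expand both $\mathrm{Li}_{1-q}(-x/(1-x))$ and $f_q(x)$ modulo $q^2$ in parallel. For the polylogarithm, I begin from the Stirling expansion $\mathrm{Li}_{-(q-1)}(y)=\sum_{j=1}^{q-1}j!\,S(q-1,j)\,y^j/(1-y)^{j+1}$; since $1-(-x/(1-x))=1/(1-x)$, the substitution collapses this rational function to the polynomial
\[
\mathrm{Li}_{1-q}\!\bigl(-x/(1-x)\bigr) = (1-x)\sum_{j=1}^{q-1}(-1)^j\,j!\,S(q-1,j)\,x^j.
\]
Plugging in $j!\,S(q-1,j)=\sum_{i=1}^{j}(-1)^{j-i}\binom{j}{i}i^{q-1}$ and using Fermat's little theorem $i^{q-1}=1+qF(i)$ (with Fermat quotient $F(i):=(i^{q-1}-1)/q\in\ZZ$) together with the elementary identity $\sum_{i=1}^{j}(-1)^{j-i}\binom{j}{i}=(-1)^{j+1}$, this becomes
\[
\mathrm{Li}_{1-q}\!\bigl(-x/(1-x)\bigr) \equiv -(x-x^q) + q(1-x)\sum_{j=1}^{q-1}(-1)^j E_j\,x^j \bmod q^2,
\]
where $E_j:=\sum_{i=1}^{j}(-1)^{j-i}\binom{j}{i}F(i)$.

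For $f_q(x)$ I proceed in parallel. Write $jl=k(j,l)+q\lfloor jl/q\rfloor$ with $k(j,l)\in\{1,\ldots,q-1\}$ the residue of $jl$ modulo~$q$; since each residue class has exactly $q-1$ preimage pairs, the defining double sum of $f_q$ splits as $(q-1)\sum_{t,k}(-1)^k k\binom{t}{k}\,x^t/t + q\,V(x)$, where $V(x):=\sum_{t,k}(-1)^k\binom{t}{k}M_k\,x^t/t$ and $M_k:=\sum_{1\le j,l\le q-1,\,jl\equiv k\bmod q}\lfloor jl/q\rfloor$. Using $k\binom{t}{k}=t\binom{t-1}{k-1}$, the inner binomial sums telescope and vanish except at the endpoints $t=1,q$, leaving the main part equal to $-x+x^q$; hence
\[
f_q(x) \equiv (x-x^q) + q\bigl[(x^q-x)+V(x)\bigr] \bmod q^2.
\]

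Setting $c_q=-1+qb$, the target congruence reduces (after cancelling the common $(x-x^q)$) to the polynomial identity
\[
V(x) + (1-x)\sum_{j=1}^{q-1}(-1)^j E_j\,x^j \equiv (b-1)(x^q-x) \bmod q.
\]
Since any polynomial in $(\ZZ/q\ZZ)[x]$ of degree at most $q$ that vanishes on all of $\ZZ/q\ZZ$ is a scalar multiple of $x^q-x$, it will suffice to verify that the left-hand side evaluates to $0\bmod q$ at each $a\in\{0,1,\ldots,q-1\}$, with $b$ then forced by the coefficient of $x^q$. The endpoints $a=0$ and $a=1$ are easy: at $a=1$ the hockey-stick identity $\sum_{t=k}^{q}\binom{t-1}{k-1}=\binom{q}{k}$ combined with $q\mid\binom{q}{k}$ for $1\le k\le q-1$ forces $V(1)\in q\ZZ_{(q)}$, while the second sum vanishes thanks to the $(1-x)$ factor. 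The hard part will be $a\in\{2,\ldots,q-1\}$: here I anticipate invoking a Lerch-type congruence expressing Fermat quotients as weighted floor sums modulo~$q$ (so that the $E_j$-side is rewritten in terms of $\lfloor il/q\rfloor$), followed by a careful reindexing of the resulting double sums so as to match the $M_k$'s entering $V(a)$. This combinatorial bookkeeping---matching two different parametrizations of the same underlying multiplicative data---is where I expect the principal technical difficulty to lie.
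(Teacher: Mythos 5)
Your reductions are correct as far as they go: the deduction of (2) from (1), the collapse of $\mathrm{Li}_{1-q}(-x/(1-x))$ to $(1-x)\sum_{j}(-1)^j j!\,S(q-1,j)x^j$, the extraction of the main terms $\mp(x-x^q)$ on both sides, and the resulting reduction to the mod-$q$ identity $V(x)+(1-x)\sum_j(-1)^jE_j x^j\equiv(b-1)(x^q-x)$ are all sound (and your endpoint checks at $a=0,1$ work). But the proof has a genuine gap exactly where you say you ``expect the principal technical difficulty to lie'': the verification of that identity at $a=2,\dots,q-1$ is not a bookkeeping afterthought, it \emph{is} the theorem. Everything up to that point is formal manipulation of binomial identities; the actual arithmetic content --- matching the floor sums $M_k=\sum_{jl\equiv k}\lfloor jl/q\rfloor$ against the Fermat-quotient combinations $E_j$ --- is precisely what distinguishes this congruence from a triviality, and you have only named the tool (``a Lerch-type congruence'') without exhibiting the identity it must prove or carrying out the reindexing. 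As written, the argument establishes $f_q(x)\equiv -\mathrm{Li}_{1-q}(-x/(1-x))\bmod q$ (which is essentially \cref{fqgq}-(1) combined with the easy part of \cref{pl}) but not the mod $q^2$ statement.

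For comparison, the paper isolates the entire arithmetic content in one clean congruence (\cref{tipsforpl}-(2)): $\sum_{jl\equiv k\ (q)}jl\equiv k^q\sum_{j}jj^*\bmod q^2$, proved by writing the unique $l$ with $jl\equiv k$ as $q(\tfrac{j^*k^q}{q}-[\tfrac{j^*k^q}{q}])$ and invoking Lerch's congruence $\tfrac{k^q-k}{q}\equiv\sum_j\tfrac1j[\tfrac{jk}{q}]\bmod q$; this replaces the weight $jl$ by $c_qk^q$ inside the defining sum of $f_q$, after which the generating-function identity $\mathcal F_q(x)=\tfrac1{1+x}\mathrm{Li}_{-q}(\tfrac{x}{1+x})$ and the integration $\int_0^x\mathcal F_q\,\tfrac{dx}{x}=\mathrm{Li}_{1-q}(\tfrac{x}{1+x})$ finish the proof with no pointwise evaluation needed. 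Your route via $\mathrm{Li}_{-(q-1)}$, Stirling numbers, and interpolation on $\mathbb F_q$ could in principle be completed, and the same Lerch congruence would be the engine, but until you state and prove the precise identity relating $V(a)$ to $\sum_j(-1)^jE_ja^j$ the proof is incomplete.
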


The statements of \cref{main2} are a part of \cref{pl}, which is stated and proved in \cref{polylog}.

{\setlength{\leftmarginii}{0pt}\begin{exm} \label{exm}
For small $q$, we write down 
\begin{itemize}
\item $\overline f_q(x):=f_q(x) \bmod q^2$, 
\item $\overline{\mathrm{Li}}_{1-q}(x):=\mathrm{Li}_{1-q}(x) \bmod q^2$, 
\item $S_q$, 
\item $T_q:=\{\frac{n}{m} \bmod q^2 \mid \mathrm{Li}_{1-q}(\frac{n}{m}) \in q^2\ZZ_{(q)}\}$, putting $\frac{n}{m} \bmod q^2:=\infty$ if $\frac{m}{n} \in q^2\ZZ_{(q)}$, 
\item list of small primes of the form $p=\Phi_q(n/m)m^{q-1}$ with $( \frac{q}{p} )_q=1$, in ascending order.
\end{itemize}

\begin{enumerate}
\item $q=3$.
\begin{itemize}
\item $\overline f_3(x) =8x^{3} + 6x^{2} + 4x$, 
\item $\overline{\mathrm{Li}}_{1-3}(x)=(x^2 + x)/(1-x)^3$,
\item $S_3=\{0, 1, 5\}$,
\item $T_3=\{0,8,\infty\}$,
\item $p=61, 67, 73, 103, 151, 193, 271, 307, 367, 439, 499, 523, 547, 577, 613, 619, 643, \dots$.
\end{itemize}

\item $q=5$.
\begin{itemize}
\item $\overline f_5(x) =4x^{5} + 15x^{4} + 10x^{2} + 21x$,
\item $\overline{\mathrm{Li}}_{1-5}(x)=(x^4 + 11x^3 + 11x^2 + x)/(1-x)^5$,
\item $S_5=\{0, 1, 2, 13, 24\}$,
\item $T_5=\{0,2,13,24,\infty\}$,
\item $p=31, 19141, 30941, 48871, 114641, 125591, 141961, 170101, 225241, 246931, \dots$
\end{itemize}

\item $q=7$.
\begin{itemize}
\item $\overline f_7(x) =20x^{7} + 28x^{6} + 28x^{5} + 7x^{4} + 14x^{3} + 35x^{2} + 15x$, 
\item $\overline{\mathrm{Li}}_{1-7}(x)=(x^6 + 8x^5 + 8x^4 + 8x^3 + 8x^2 + x)/(1-x)^7$,
\item $S_7=\{0, 1, 6, 17, 25, 33, 44\}$,
\item $T_7=\{0,9,11,24,47,48,\infty\}$,
\item $p=43, 10501, 3692053, 109894303, 115928821, 138520537, 141903217,  \dots$.
\end{itemize}
\end{enumerate}
Some general properties of $S_q$, $T_q$, including that $0,\frac{1}{2},1\bmod q^2 \in S_q$ and $0,-1,\infty \in T_q$, are given in \cref{pl}-(3).
\end{exm}}

\begin{rmk} \label{ec}
\begin{enumerate}
\item Euler conjectured several formulas for cubic residue.
Let $p$ be a prime with $p \equiv1 \bmod 3$. Then we can write $4p=L^2+27M^2$ with $L,M\in \ZZ$.
In this setting, one of Euler's conjectures (\cite[Proposition 7.2]{Lem}) states that the following are equivalent.
\begin{enumerate}
\item $(\frac{3}{p})_3=1$.
\item $M\equiv 0 \mod 3$.
\end{enumerate}
This statement is equivalent to ``(1)$\LR$(3)-part'' of \cref{mainthm} with $q=3$, in view of \cref{exm}-(1).
The correspondence of notations is 
\begin{align*}
(L,M)=
\begin{cases}
(m+2n,\frac{m}{3}) & (3 \mid m), \\
(2m+n,\frac{n}{3}) & (3 \mid n), \\
(m-n,\frac{m+n}{3}) & (3 \mid (m+n)).
\end{cases}
\end{align*}
Hence we see that $M \equiv 0 \mod 3 \LR$ $m$, $n$ or $m+n \equiv 0 \mod 9 \LR \frac{-n}{m-n} \equiv 0,1,5 \mod 9$.
\item The case of $q=3$ is related also to the Feit-Thompson conjecture (\cite{FT}).
Let $p,q$ be any distinct primes. Then the Feit-Thompson conjecture states that 
\begin{align*}
\Phi_q(p) \nmid \Phi_p(q).
\end{align*}
By a similar argument to that in \cite[Proof of Theorem]{Le}, we see that \cref{mainthm} with $q=3$ implies the Feit-Thompson conjecture with $q=3$ and $p \not \equiv 8 \bmod 9$.
We note that although \cite[Theorem]{Le} states a stronger statement, there seems to be a mistake in \cite[Lemma 2, needing to drop ``coprime'' in the statement]{Le}. 
\item The fourth author, in his Master's thesis (in Japanese), conjectured a weaker version of \cref{mainthm} and proved it in the case of $q=5$.
He used the following analogue of Euler's conjecture, which was proved in e.g. \cite[p122]{Leh}, \cite[Theorem 8]{Di}, \cite[(7)]{Mu}:
let $p$ be a prime with $p\equiv 1 \bmod 5$. Then we can write with $x,u,v,w \in \ZZ$
\begin{align*}
16p&=x^2+50u^2+50v^2+125w^2, \\
xw&=v^2-4uv-u^2, \\
x&\equiv 1 \mod 5.
\end{align*}
Then the following are equivalent.
\begin{enumerate}
\item $(\frac{5}{p})_5=1$
\item $u\equiv 2v \mod 5$.
\end{enumerate}
\end{enumerate}
\end{rmk}

The outline of this paper is as follows. 
We first prove \cref{mainthm} in Sections \ref{ke} through \ref{calc}.
In \cref{ke}, we relate the question to a suitable Kummer extension over $\QQ(\zeta_q)$ with $\zeta_q$ a primitive $q$-th root of unity.
This argument is analogous to Lemmermeyer's proof of Euler's conjectures for cubic residue (\cite[\S 7]{Lem}).
We give an explicit element $\mu^{(i)}\in \ZZ[\zeta_q]$ whose $q$-th root is a generator of this Kummer extension (\cref{im}).
Then, in (\ref{lc}), we see that $( \frac{q}{p} )_q=1$ is equivalent to $\mu^{(i)}$ being a $q$-th power element $\bmod$ $(1-\zeta_q)^{q+1}$. 
\cref{orders} is a key result although its proof is postponed due to the lengthy calculations required.
In \cref{profmain}, we give complete representatives of $\ZZ[\zeta_q]/(1-\zeta_q)^e$ ($e=q$, $q+1$) and its $q$-th power elements and prove \cref{mainthm}.
In \cref{calc}, we calculate the value of the $q$-adic $\log$ of $\mu^{(i)}$ modulo $(1-\zeta_q)^e$ and prove \cref{orders}, which is postponed in \cref{ke}.
In \cref{polylog}, we provide an explicit relation between $f_q(x)$ and the polylogarithm $\mathrm{Li}_{1-q}(x)$, which derives the symmetric properties of $f_q(x)$ and $S_q$.
The main result in this section is \cref{pl}, which implies \cref{main2}.

\subsection*{Acknowledgement}

This work was supported by JSPS KAKENHI Grant Numbers JP21K03185, JP21K13779, JP22K03253 and JST, CREST Grant Number JPMJCR1913, Japan.

\subsection*{Notation}
We denote the cardinality of a set $S$ by $|S|$. 
For $k \in \NN$, we put $\zeta_k:=\exp(2 \pi \sqrt{-1}/k)$ and $\Phi_k$ to be the minimal polynomial of $\zeta_k$ over $\QQ$.
For a prime $q$, 
we normalize the $q$-adic additive valuation, which we denote by $v_q$, so that $v_q(q)=1$.
We denote the localization of $\ZZ$ at the prime ideal $q\ZZ$ by $\ZZ_{(q)}$.
For $a \in \ZZ_{(q)}$ with $q \nmid a$, we put $a^*$ to be the integer satisfying 
\begin{align*}
1\leq a^* \leq q-1, \quad aa^*\equiv 1 \mod q. 
\end{align*}

\section{Kummer extension related to $(\frac{q}{p})_q$} \label{ke}

In this section, we see that $(\frac{q}{p})_q=1$ is equivalent to a congruence condition on a generator $\mu$ of a suitable Kummer extension in the last line in (\ref{lc}),
and give an explicit generator in \cref{im}.
Let $p$ be an odd prime satisfying $p\equiv 1 \mod q$. We consider the following diagram.
\begin{center}
\begin{tikzcd}[row sep=tiny, column sep=tiny]
&\QQ(\zeta_{qp}) \ar[dl, dash] \ar[ddr, dash]\\
\QQ(\zeta_{p}) \ar[ddr, dash]\\
&&K(\zeta_q)=\QQ(\zeta_q)(\sqrt[q]{\mu}) \hspace*{-75pt} \ar[dl, dash] \ar[ddr, dash]\\
&K \ar[ddr, dash, "\text{degree }q" sloped]\\
&&&\QQ(\zeta_q) \ar[dl, dash]\\
&&\QQ
\end{tikzcd}
\end{center}
Here we put $K$ to be the unique subfield of $\QQ(\zeta_p)$ with $[K:\QQ]=q$. 
The field $K(\zeta_q)$ is characterized by the following: 
\begin{equation} \label{char} 
\begin{split}
&\text{$K(\zeta_q)$ is an abelian extension over $\QQ$ of degree $q(q-1)$} \\
&\text{and $K(\zeta_q)$ is an unramified extension over $\QQ(\zeta_q)$ outside of $p$.} 
\end{split}
\end{equation}
The uniqueness is due to the Kronecker-Weber Theorem, which implies $K(\zeta_q) \subset \QQ(\zeta_{qp^\infty})$.
We see the following equivalences, which is a complete analogue of arguments in \cite[\S 7.1]{Lem}.
In particular, the last equivalence follows from the decomposition law \cite[Theorem 4.12- c)]{Lem} for Kummer extensions.
\begin{equation} \label{lc}
\begin{split}
( \tfrac{q}{p})_q=1 \quad &\LR \quad q^\frac{p-1}{q} \equiv 1 \mod p \\
&\LR \quad \text{$q$ splits in $K/\QQ$} \\
&\LR \quad \text{$(1-\zeta_q)$ splits in $K(\zeta_q)/\QQ(\zeta_q)$} \\
&\LR \quad \text{$\mu \equiv \xi^q \mod (1-\zeta_q)^{q+1}$ for some $\xi \in \ZZ[\zeta_q]$},
\end{split}
\end{equation}
where $\mu \in \ZZ[\zeta_q]$ is any element satisfying $K(\zeta_q)=\QQ(\zeta_q)(\sqrt[q]{\mu})$ and $(1-\zeta_q) \nmid \mu$.

Hereinafter, we return to the setting in \cref{s1}:
\begin{center}
$p=\Phi_q(n/m)m^{q-1}$ is a prime other than $q$,
\end{center}
which implies $p \equiv 1 \mod q$ by \cref{equiv1}.
In this setting, we obtain an explicit element $\mu$ applicable to (\ref{lc}) as in \cref{im} below.

\begin{dfn} \label{mu}
For $i \in \ZZ$ with $0 \leq i \leq q-1$ we put 
\begin{align*} 
\mu^{(i)}=\zeta_q^i \prod_{j=1}^{q-1} (m-n\zeta_q^j)^{j^*} \in \ZZ[\zeta_q].
\end{align*} 
\end{dfn}

The following Proposition follows from Kummer theory.

\begin{prp} \label{ur}
Assume that $0 \leq i \leq q-1$.
\begin{enumerate}
\item $\QQ(\zeta_q)(\sqrt[q]{\mu^{(i)}})/\QQ$ is an abelian extension.
\item $\QQ(\zeta_q)(\sqrt[q]{\mu^{(i)}})/\QQ(\zeta_q)$ is an unramified extension outside of $qp$.
\item  $(1-\zeta_q)$ is unramified in $\QQ(\zeta_q)(\sqrt[q]{\mu^{(i)}})/\QQ(\zeta_q)$ if and only if $\mu^{(i)} \equiv \xi^q \mod (1-\zeta_q)^q$ for some $\xi \in \ZZ[\zeta_q]$.
\end{enumerate}
\end{prp}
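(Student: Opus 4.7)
The plan is to prove the three parts by Kummer-theoretic manipulations together with a standard local computation. For (1), I will use the Kummer-theoretic criterion that $\QQ(\zeta_q)(\sqrt[q]{\mu^{(i)}})/\QQ$ is abelian if and only if $\sigma_a(\mu^{(i)}) \equiv (\mu^{(i)})^a \pmod{(\QQ(\zeta_q)^\times)^q}$ for every $\sigma_a \in \mathrm{Gal}(\QQ(\zeta_q)/\QQ)$, where $\sigma_a(\zeta_q) = \zeta_q^a$. The map $\sigma_a$ permutes the factors $m - n\zeta_q^j$ via $j \mapsto aj$; after substituting $k = aj$, the exponent of $m - n\zeta_q^k$ in $\sigma_a(\mu^{(i)})$ becomes $(a^{-1}k)^*$. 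Since $(a^{-1}k)^* \equiv ak^* \pmod q$, the difference $(a^{-1}k)^* - ak^*$ is a multiple of $q$, so $(m-n\zeta_q^k)^{(a^{-1}k)^*}$ agrees with $(m-n\zeta_q^k)^{ak^*}$ modulo $q$-th powers. Combined with $\sigma_a(\zeta_q^i) = (\zeta_q^i)^a$, this yields the required congruence.

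For (2), the first step is to observe that each $m - n\zeta_q^j$ ($1 \leq j \leq q-1$) is a prime element of $\ZZ[\zeta_q]$ lying above $p$: its norm equals $\prod_{k=1}^{q-1}(m-n\zeta_q^{jk}) = \prod_{l=1}^{q-1}(m-n\zeta_q^l) = p$, which is prime. Hence the principal ideal $(\mu^{(i)}) = \prod_{j=1}^{q-1}(m-n\zeta_q^j)^{j^*}$ is supported only on primes above $p$, and $v_{\mathfrak{q}}(\mu^{(i)}) = 0$ at every prime $\mathfrak{q}$ of $\ZZ[\zeta_q]$ not above $qp$. The standard criterion that a Kummer extension $\QQ(\zeta_q)(\sqrt[q]{\mu})/\QQ(\zeta_q)$ is unramified at a prime $\mathfrak{q} \nmid q$ whenever $q \mid v_{\mathfrak{q}}(\mu)$ then yields (2).

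For (3), this is the local statement of Kummer theory at $\pi := 1-\zeta_q$. By (2), $\mu^{(i)}$ is a unit at $\pi$, so locally $\QQ_q(\zeta_q)(\sqrt[q]{\mu^{(i)}})/\QQ_q(\zeta_q)$ is a cyclic Kummer extension of degree dividing $q$. An analysis of the $q$-th power map on the filtration of principal units (or equivalently local class field theory) identifies the subgroup of $\QQ_q(\zeta_q)^\times/(\QQ_q(\zeta_q)^\times)^q$ corresponding to the unique unramified cyclic degree-$q$ extension of $\QQ_q(\zeta_q)$ with the image of the units congruent to $1$ modulo $(1-\zeta_q)^q$. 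Hence this extension is unramified iff $\mu^{(i)} \equiv \xi^q \pmod{(1-\zeta_q)^q}$ for some $\xi \in \ZZ[\zeta_q]$; cf.\ \cite[Theorem 4.12]{Lem}. The main obstacle is the careful tracking of the exponents $j^*$ in (1) and the sharp cutoff $\pi^q$ versus $\pi^{q+1}$ in (3) (which separates the unramified condition from the splitting condition used in (\ref{lc})); parts (2) and the remaining pieces of (1) and (3) are otherwise direct applications of standard criteria.
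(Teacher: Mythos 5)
Your proof is correct and follows essentially the same route as the paper, which simply cites \cite[Corollary 4.17]{Lem} for (1) and \cite[Theorem 4.12]{Lem} (or \cite[Theorem 6.3]{Gr}) for (2) and (3), together with the observation that the $m-n\zeta_q^j$ are the primes above $p$. The only difference is that you explicitly verify the hypotheses of those standard criteria (the congruence $\sigma_a(\mu^{(i)})\equiv(\mu^{(i)})^a$ modulo $q$-th powers via $(a^*k)^*\equiv ak^*\bmod q$, and the norm computation showing each factor is prime), which the paper leaves implicit in the citations.
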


\begin{proof}
(1) follows from \cite[Corollary 4.17]{Lem}.

\medskip

\noindent
(2) and (3) follow from \cite[Theorem 4.12]{Lem} or \cite[Theorem 6.3]{Gr}.
Here we note that $(m-n\zeta_q^j)$ ($1\leq j \leq q-1$) are all prime ideals lying above $p=\prod_{j=1}^{q-1} (m-n\zeta_q^j)$
and hence $(1-\zeta_q) \nmid \mu^{(i)}$.
\end{proof}

Our proof of the following Theorem is rather lengthy, so it is postponed to \cref{calc}.

\begin{thm} \label{orders}
We take $0 \leq i \leq q-1$ so that  
\begin{align*}
i\equiv \frac{-n}{m-n} \mod q.
\end{align*}
\begin{enumerate}
\item $\mu^{(i)} \equiv (m-n)^{\frac{(q-1)q}{2}} \mod (1-\zeta_q)^q$.
\item The following are equivalent.
\begin{enumerate}
\item $\mu^{(i)} \equiv (m-n)^{\frac{(q-1)q}{2}} \mod (1-\zeta_q)^{q+1}$.
\item $f_q(\frac{-n}{m-n}) \in q^2\ZZ_{(q)}$.
\item $\frac{-n}{m-n}\bmod q^2 \in S_q$.
\end{enumerate}
\end{enumerate}
\end{thm}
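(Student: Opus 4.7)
The strategy is to convert the multiplicative congruences into additive ones via the $q$-adic (Iwasawa) logarithm $\log_q$. Since $\log_q$ kills $\zeta_q$, the factor $\zeta_q^i$ contributes nothing, and writing $m - n\zeta_q^j = (m-n)(1 - a(1-\zeta_q^j))$ with $a := -n/(m-n) \in \ZZ_{(q)}$ yields the identity
\begin{equation*}
\log_q\!\left(\mu^{(i)}/(m-n)^{q(q-1)/2}\right) = \sum_{j=1}^{q-1} j^* \log_q\!\left(1 - a(1-\zeta_q^j)\right),
\end{equation*}
independent of $i$. The role of $i$ surfaces at the level of $\pi^2$, where $\pi := 1 - \zeta_q$: a direct expansion using $\zeta_q^i \equiv 1 - i\pi$ and $(1-a(1-\zeta_q^j))^{j^*} \equiv 1 - a\pi \pmod{\pi^2}$ gives $\mu^{(i)}/(m-n)^{q(q-1)/2} \equiv 1 + (a-i)\pi \pmod{\pi^2}$, so the hypothesis $i \equiv a \bmod q$ of the theorem is precisely what places this ratio in $1 + \pi^2 \ZZ_q[\zeta_q]$. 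For $q \ge 3$, standard $q$-adic analysis then makes $\log_q$ a bijection $1 + \pi^e \ZZ_q[\zeta_q] \to \pi^e \ZZ_q[\zeta_q]$ for every $e \ge 2$, so Part (1) and condition (2)(a) translate into divisibility of $\log_q(\mu^{(i)}/(m-n)^{q(q-1)/2})$ by $\pi^q$ and $\pi^{q+1}$, respectively.

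Next I would expand the logarithm series,
\begin{equation*}
\log_q\!\left(\mu^{(i)}/(m-n)^{q(q-1)/2}\right) = -\sum_{k \ge 1} \frac{a^k}{k}\, S_k, \qquad S_k := \sum_{j=1}^{q-1} j^* (1-\zeta_q^j)^k,
\end{equation*}
and evaluate modulo $\pi^{q+1}$. A $\pi$-adic valuation count isolates the contributions from $k = 1, \dots, q$: for $1 \le k \le q-1$ the term is in $\pi^k$; the $k = q$ term loses a factor $\pi^{q-1}$ from $1/q$ and is divisible only by $\pi$; all $k \ge q+1$ are already in $\pi^{q+1}$. Expanding each $(1-\zeta_q^j)^k$ binomially introduces the auxiliary sums $T_\ell := \sum_{j=1}^{q-1} j^* \zeta_q^{j\ell}$, which I analyse by splitting on $\ell \bmod q$: if $q \mid \ell$ then $T_\ell = q(q-1)/2$; if $q \nmid \ell$, the reindexing $j \mapsto (\ell \bmod q)^* j$ reduces $T_\ell$ to a multiple of $T_1$ plus an integer-part correction of order $q$ arising from the difference between multiplication in $\ZZ$ and $\ZZ/q\ZZ$. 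The expected outcome is a congruence of the form
\begin{equation*}
\log_q\!\left(\mu^{(i)}/(m-n)^{q(q-1)/2}\right) \equiv c \cdot \pi \cdot f_q(a) \pmod{\pi^{q+1}}
\end{equation*}
for some unit $c \in \ZZ_{(q)}[\zeta_q]^\times$, with the triple-indexed coefficient of $f_q$ emerging precisely from the combined binomial expansion and multiplicative reindexing.

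Granted this identification, Part (1) follows because $f_q(a) \in q\ZZ_{(q)}$ for every $a \in \ZZ_{(q)}$---a property of the polynomial $f_q$ verifiable directly using $\binom{q}{k} \equiv 0 \pmod q$ for $1 \le k \le q-1$, together with the combinatorial identity $\sum_{k=1}^t k(-1)^k\binom{t}{k} = 0$ for $2 \le t \le q-1$ and Fermat's little theorem---so that $\pi f_q(a) \in \pi^q \ZZ_q[\zeta_q]$ unconditionally. The equivalence (a) $\Leftrightarrow$ (b) in Part (2) is then $\pi f_q(a) \in \pi^{q+1} \ZZ_q[\zeta_q] \Leftrightarrow f_q(a) \in q^2 \ZZ_{(q)}$, valid for $q \ge 3$; and (b) $\Leftrightarrow$ (c) follows from the congruence $f_q'(x) \equiv 1 \bmod q$, immediate from the same identity, which makes $a' \mapsto a' - f_q(a') \bmod q^2$ an injection from $\{0, \dots, q-1\}$ with image $S_q$ by definition, and which implies $b \equiv a' - f_q(a') \pmod{q^2}$ is solvable in $a'$ exactly when $f_q(b) \equiv 0 \pmod{q^2}$. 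The main obstacle is the combinatorial bookkeeping: matching the triple sum defining $f_q$ against what emerges from $\sum_{k=1}^q (a^k/k)\,S_k \bmod \pi^{q+1}$ requires careful tracking of the $\pi^{q-1}$ loss at $k = q$, the integer-part corrections in the $T_\ell$'s, and the higher-order congruences on $(1-\zeta_q^j)^k$, which is what makes the computation ``rather lengthy'' as the authors note.
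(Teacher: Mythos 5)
Your proposal is correct and follows essentially the same route as the paper: pass to the $q$-adic logarithm and use its bijectivity on $1+(1-\zeta_q)^e$ for $e\geq 2$ (\cref{nt}), reduce everything to the single congruence $\log_q \mu^{(i)} \equiv (1-\zeta_q)\,f_q(\tfrac{-n}{m-n}) \bmod (1-\zeta_q)^{q+1}$ obtained by expanding the log series, truncating at $t=q$, and reindexing multiplicatively modulo $q$ (\cref{fqgq}-(3)), and then conclude via $f_q(x)\equiv x-x^q \bmod q$ together with the Hensel-type argument identifying $S_q$ with the zero set of $f_q$ modulo $q^2$ (\cref{fqgq}-(1),(2)). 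The only part you leave as a sketch --- matching the triple-indexed coefficient of $f_q$ against the expanded series, which the paper organizes via \cref{bc}-(3) rather than your integer-part corrections to the sums $T_\ell$ --- is precisely the ``lengthy calculation'' the paper carries out, and your valuation bookkeeping for it is sound.
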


\begin{crl} \label{im}
Let $m,n,K$ be as above. That is, $p=\Phi_q(n/m)m^{q-1}$ is a prime other than $q$ and $K$ is the unique subfield of $\QQ(\zeta_{p})$ with $[K:\QQ]=q$. 
Assume that $0 \leq i \leq q-1$ and $i\equiv \frac{-n}{m-n} \mod q$.
Then a $q$-th root $\sqrt[q]{\mu^{(i)}}$ is a generator of the Kummer extension $K(\zeta_q)/\QQ(\zeta_q)$ satisfying $(1-\zeta_q) \nmid \mu^{(i)}$. 
\end{crl}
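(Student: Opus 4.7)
The plan is to assemble \cref{ur} with the explicit congruence of \cref{orders}(1) and then invoke the uniqueness characterization (\ref{char}) of $K(\zeta_q)$. I treat $q$ odd, which is the case of interest beyond the classical second supplementary law. First I would extract from \cref{orders}(1) that $\mu^{(i)} \equiv (m-n)^{q(q-1)/2} \mod (1-\zeta_q)^q$; by \cref{equiv1}, $q \nmid (m-n)$, so $m-n$ is a unit at the prime $(1-\zeta_q)$ of $\ZZ[\zeta_q]$, whence $(1-\zeta_q) \nmid \mu^{(i)}$, giving the side condition in the statement.

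Next, set $L := \QQ(\zeta_q)(\sqrt[q]{\mu^{(i)}})$. By \cref{ur}(1) and (2), $L/\QQ$ is abelian and $L/\QQ(\zeta_q)$ is unramified outside the primes dividing $qp$. Because $q$ is odd, $q(q-1)/2$ is a multiple of $q$, so with $\xi := (m-n)^{(q-1)/2}$ we have $(m-n)^{q(q-1)/2} = \xi^q$, and the congruence above becomes $\mu^{(i)} \equiv \xi^q \mod (1-\zeta_q)^q$. The ``if'' direction of \cref{ur}(3) then guarantees that $(1-\zeta_q)$ is unramified in $L/\QQ(\zeta_q)$ as well, so $L/\QQ(\zeta_q)$ is in fact unramified outside $p$.

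To rule out the degenerate case $L = \QQ(\zeta_q)$, I would use that $p \equiv 1 \mod q$ (also from \cref{equiv1}) forces $p$ to split completely in $\QQ(\zeta_q)/\QQ$ into the distinct prime ideals $(m - n\zeta_q^j)$, $1 \leq j \leq q-1$, as already recalled inside the proof of \cref{ur}. The valuation of $\mu^{(i)}$ at $(m - n\zeta_q^j)$ equals the exponent $j^* \in \{1,\dots,q-1\}$, which is not divisible by $q$. Consequently $\mu^{(i)}$ cannot be a $q$-th power in $\QQ(\zeta_q)^{\times}$, and Kummer theory delivers $[L : \QQ(\zeta_q)] = q$, hence $[L : \QQ] = q(q-1)$.

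At this stage, $L/\QQ$ is an abelian extension of degree $q(q-1)$ which is unramified over $\QQ(\zeta_q)$ outside $p$, so the uniqueness statement in (\ref{char}) forces $L = K(\zeta_q)$, finishing the corollary. The substantive arithmetic is completely absorbed into \cref{orders}(1) and the known splitting behavior of $p$ in $\QQ(\zeta_q)$; the only things to keep straight are the factorization of the exponent $q(q-1)/2$ through $q$ and the non-$q$-divisibility of the exponents $j^*$. I therefore do not expect any genuine obstacle in this corollary itself, with the real difficulty deferred to the proof of \cref{orders} in \cref{calc}.
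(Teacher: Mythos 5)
Your proposal is correct and follows essentially the same route as the paper: combine \cref{ur} with \cref{orders}-(1) to verify the characterization (\ref{char}) of $K(\zeta_q)$. You merely spell out two points the paper leaves implicit — that $(m-n)^{q(q-1)/2}$ is literally a $q$-th power because $q$ is odd, and that the exponents $j^*$ at the primes above $p$ force $[L:\QQ(\zeta_q)]=q$ — both of which are accurate.
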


\begin{proof}
Since $K(\zeta_q)$ is characterized by (\ref{char}), it follows from \cref{ur} and \cref{orders}-(1).
\end{proof}

\section{$q$-th power elements modulo a power of $(1-\zeta_q)$} \label{profmain}

In this section, we prove \cref{mainthm}  by studying a condition for an element in $\ZZ[\zeta_q]$ being a $q$-th power element modulo a power of $(1-\zeta_q)$.
We prepare the following Proposition.

\begin{prp} \label{unram}
\begin{enumerate}
\item A complete representative set of $\ZZ[\zeta_q]/(1-\zeta_q)^q$ is given by
\begin{align} \label{rs}
\left\{a_0+a_1(1-\zeta_q)+\dots+a_{q-2}(1-\zeta_q)^{q-2} \ \bigg | \ \genfrac{}{}{0pt}{}{\displaystyle a_i \in \ZZ, \ 0\leq a_0 <q^2}{\displaystyle 0\leq a_1,\dots,a_{q-2} <q} \right\}.
\end{align}
The representatives of $\{\xi^q \bmod (1-\zeta_q)^q \mid \xi \in \ZZ[\zeta_q]\}$ in (\ref{rs}) are given by
\begin{align*}
\left\{a_0+a_1(1-\zeta_q)+\dots+a_{q-2}(1-\zeta_q)^{q-2} \ \bigg | \ \genfrac{}{}{0pt}{}{\displaystyle a_0 \equiv b^q \bmod q^2 \text{ for some } b \in \ZZ}{\displaystyle a_1,\dots,a_{q-2}=0} \right\}.
\end{align*}
\item A complete representative set of $\ZZ[\zeta_q]/(1-\zeta_q)^{q+1}$ is given by
\begin{align} \label{rs2}
\left\{a_0+a_1(1-\zeta_q)+\dots+a_{q-2}(1-\zeta_q)^{q-2} \ \bigg | \ \genfrac{}{}{0pt}{}{\displaystyle a_i \in \ZZ, \ 0\leq a_0,a_1 <q^2}{\displaystyle 0\leq a_2,\dots,a_{q-2} <q} \right\}.
\end{align}
The representatives of $\{\xi^q \bmod (1-\zeta_q)^{q+1} \mid \xi \in \ZZ[\zeta_q]\}$ in (\ref{rs2}) are given by
\begin{multline*} 
\biggl \{a_0+a_1(1-\zeta_q)+\dots+a_{q-2}(1-\zeta_q)^{q-2} \\
\bigg | \ 
\genfrac{}{}{0pt}{}{[\displaystyle \,a_0=0,\ a_1 \equiv 0 \bmod q,\ a_2,\dots,a_{q-2}=0\,]\text{ or }}{[\displaystyle \,a_0 \equiv b^q \bmod q^2 \text{ for some } b \in \ZZ,\ a_1,a_2,\dots,a_{q-2}=0\,]}
\biggr\}. 
\end{multline*}
\end{enumerate}
\end{prp}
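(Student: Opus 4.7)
Set $\pi := 1-\zeta_q$. The plan rests on three basic facts about $\pi$ in $\ZZ[\zeta_q]$: (i) $\{1, \pi, \ldots, \pi^{q-2}\}$ is a $\ZZ$-basis; (ii) the factorization $q = \prod_{j=1}^{q-1}(1-\zeta_q^j)$ yields $q = \pi^{q-1} U$ with $U := \prod_{j=1}^{q-1}(1+\zeta_q+\cdots+\zeta_q^{j-1}) \in \ZZ[\zeta_q]^\times$ satisfying $U \equiv (q-1)! \equiv -1 \mod \pi$ by Wilson's theorem; and (iii) consequently $\pi^q \equiv -q\pi \mod \pi^{q+1}$, while $q\pi^j \in (\pi^{q+1})$ for $j \geq 2$ and $q^2 \in (\pi^{2(q-1)}) \subseteq (\pi^{q+1})$ (using $q \geq 3$).

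For the representative-set assertions, first observe that $|\ZZ[\zeta_q]/\pi^e| = q^e$ while the two proposed sets have cardinalities $q^q$ and $q^{q+1}$, respectively. Hence it suffices to prove surjectivity of $(a_0, \ldots, a_{q-2}) \mapsto \sum_i a_i \pi^i$. By (i), each $\xi \in \ZZ[\zeta_q]$ has a unique integer expansion in $\{\pi^i\}_{i=0}^{q-2}$, and the task reduces to checking that the permitted reductions on the $a_i$ do not alter the residue class. For part (1), $q^2 \in (\pi^q)$ and $q\pi^i = \pi^{q-1+i} U \in (\pi^q)$ for $i \geq 1$; for part (2), $q^2$ and $q^2\pi$ both lie in $(\pi^{q+1})$, and $q\pi^i \in (\pi^{q+1})$ for $i \geq 2$. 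All of these are immediate from (ii) and (iii).

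For the $q$-th power descriptions, split by whether $\pi \mid \xi$. If $\pi \nmid \xi$, write $\xi = b_0 + \pi\eta$ with $b_0 \in \ZZ$, $q \nmid b_0$, and $\eta \equiv c_0 \mod \pi$, $c_0 \in \ZZ$; the binomial expansion
\begin{equation*}
\xi^q - b_0^q = \sum_{k=1}^q \binom{q}{k} b_0^{q-k} \pi^k \eta^k
\end{equation*}
has $k$-th term of $\pi$-valuation at least $(q-1)+k$ for $1 \leq k \leq q-1$, so only the $k=1$ and $k=q$ terms survive modulo $\pi^{q+1}$. Substituting $\pi^q \equiv -q\pi$ and $\eta \equiv c_0 \mod \pi$, these combine to $qc_0\pi(b_0^{q-1}-1)$, which by Fermat's little theorem lies in $q^2\pi \subseteq (\pi^{q+1})$. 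Hence $\xi^q \equiv b_0^q \mod \pi^{q+1}$. If $\pi \mid \xi$, writing $\xi = \pi\eta$ and using Frobenius gives $\xi^q = \pi^q \eta^q \equiv -c_0 q\pi \mod \pi^{q+1}$. Reading off the canonical representatives yields exactly the sets described in (1) and (2).

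The central delicacy is the precision jump from $\pi^q$ to $\pi^{q+1}$ required in part (2) when $\pi \nmid \xi$: the two surviving terms $q b_0^{q-1} \pi\eta$ and $\pi^q \eta^q$ individually have $\pi$-valuation exactly $q$, and their cancellation to higher order hinges on the precise sign $-1$ in $\pi^q \equiv -q\pi \mod \pi^{q+1}$, which ultimately traces back to Wilson's theorem. This is the only step of the proof where the argument is nontrivial.
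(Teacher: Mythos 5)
Your proof is correct and follows essentially the same route as the paper's: a cardinality count combined with the observation that the reductions of the coefficients lie in $(1-\zeta_q)^e$, the key congruence $(1-\zeta_q)^q\equiv -q(1-\zeta_q)\bmod (1-\zeta_q)^{q+1}$, and the binomial expansion of $\xi^q$ with the Fermat-little-theorem cancellation at the $(1-\zeta_q)^1$ coefficient. The only cosmetic difference is that you derive $(1-\zeta_q)^q\equiv -q(1-\zeta_q)$ from $q=\prod_j(1-\zeta_q^j)$ and Wilson's theorem, whereas the paper reads it off the (Eisenstein) minimal polynomial of $1-\zeta_q$, and you organize the expansion as $\xi=b_0+(1-\zeta_q)\eta$ rather than in full coordinates.
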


\begin{proof}
(1) For $a_0,a_1,\dots,a_{q-2} \in \ZZ$, we have
\begin{equation} \label{val}
\begin{split}
&v_q(a_0+a_1(1-\zeta_q)+\dots+a_{q-2}(1-\zeta_q)^{q-2}) \\
&=\min \bigl\{ v_q(a_0),v_q(a_1(1-\zeta_q)),\dots,v_q(a_{q-2}(1-\zeta_q)^{q-2})\bigr\}.
\end{split}
\end{equation}
Hence $a_0+a_1(1-\zeta_q)+\dots+a_{q-2}(1-\zeta_q)^{q-2} \in (1-\zeta_q)^q$ is equivalent to $a_0 \in q^2\ZZ$ and $a_1,\dots, a_{q-2} \in q\ZZ$.
Therefore two different elements in (\ref{rs}) are not congruent modulo $(1-\zeta_q)^q$.
Then the first statement follows since $|\ZZ[\zeta_q]/(1-\zeta_q)^q|=q^q=$ the number of elements in (\ref{rs}).
The second statement follows from the congruence $(b_0+b_1(1-\zeta_q)+\dots+b_{q-2}(1-\zeta_q)^{q-2})^q \equiv b_0^q \mod (1-\zeta_q)^q$.

\medskip

\noindent 
(2) The first statement follows by a similar argument to that of (1). 
We prove the second statement.
The minimal polynomial of $1-\zeta_q$ is of the form 
\begin{align*}
x^{q-1}+x^{q-2}+\dots+1 |_{x=1-y} = y^{q-1}+qc_{q-2} y^{q-2} +\dots+qc_1y+q \ (c_i \in \ZZ).
\end{align*}
Then we see that 
\begin{align*}
(1-\zeta_q)^q&=(-qc_{q-2} (1-\zeta_q)^{q-2} -\dots -qc_1(1-\zeta_q)-q)(1-\zeta_q) \\
&\equiv -q(1-\zeta_q) \mod (1-\zeta_q)^{q+1}.
\end{align*}
Hence we obtain 
\begin{align*}
&(b_0+b_1(1-\zeta_q)+\dots+b_{q-2}(1-\zeta_q)^{q-2})^q \\
& \equiv b_0^q+ qb_0^{q-1}b_1(1-\zeta_q)+b_1^q(1-\zeta_q)^q \\
& \equiv b_0^q+ qb_1(b_0^{q-1}-b_1^{q-1})(1-\zeta_q) \\
& \equiv
\begin{cases}
 (- q b_1^q)(1-\zeta_q) \mod (1-\zeta_q)^{q+1}& (q \mid b_0) \\
b_0^q \mod (1-\zeta_q)^{q+1}& (q \nmid b_0)
\end{cases}
\end{align*}
as desired.
\end{proof}

Based on the above, we can rephrase the statement of \cref{orders} as follows.

\begin{dfn} \label{mua}
Let $\mu^{(i)}=\zeta_q^i \prod_{j=1}^{q-1} (m-n\zeta_q^j)^{j^*} \in \ZZ[\zeta_q]$ be as in \cref{mu}.
We define $a_k^{(i)} \in \ZZ$ for $0 \leq i \leq q-1$, $0\leq k \leq q-2$ by
\begin{align*}
\mu^{(i)} =\sum_{k=0}^{q-2} a_k^{(i)} (1-\zeta_q)^k.
\end{align*}
\end{dfn}

\begin{thm} \label{1}
Assume that $0 \leq i \leq q-1$ and $i\equiv \frac{-n}{m-n} \mod q$. 
\begin{enumerate}
\item $a_0^{(i)} \equiv (m-n)^{\frac{(q-1)q}{2}} \mod q^2$ and $a_1^{(i)} \equiv \cdots \equiv a_{q-2}^{(i)} \equiv 0 \mod q$.
\item The following are equivalent.
\begin{enumerate}
\item $a_1^{(i)} \equiv 0 \mod q^2$.
\item $f_q(\frac{-n}{m-n}) \in q^2\ZZ_{(q)}$.
\item $\frac{-n}{m-n}  \bmod q^2 \in S_q$.
\end{enumerate} 
\end{enumerate}
\end{thm}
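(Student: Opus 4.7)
The plan is to reduce \cref{1} to \cref{orders} by translating the congruences $\mu^{(i)} \equiv (m-n)^{(q-1)q/2} \bmod (1-\zeta_q)^e$ for $e = q, q+1$ into conditions on the integer coefficients $a_k^{(i)}$ of \cref{mua}. The central tool is the valuation formula (\ref{val}) established in the proof of \cref{unram}: since $v_q((1-\zeta_q)^k) = k/(q-1)$ for $0 \le k \le q-2$ and these rationals lie in distinct cosets of $\ZZ$, no cancellation of $q$-adic valuations can occur in any $\ZZ$-linear combination, so $v_q(\sum_k a_k(1-\zeta_q)^k) = \min_k (v_q(a_k) + k/(q-1))$ whenever the $a_k$ are ordinary integers.

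For Part (1), set $A := (m-n)^{(q-1)q/2}$, which is a $q$-adic unit by (\ref{equiv1}). By \cref{orders}(1) one has $\mu^{(i)} - A \in (1-\zeta_q)^q$; applying the valuation formula to the decomposition
\[
\mu^{(i)} - A = (a_0^{(i)} - A) + \sum_{k=1}^{q-2} a_k^{(i)} (1-\zeta_q)^k
\]
and using $v_q((1-\zeta_q)^q) = q/(q-1)$, this is equivalent to $v_q(a_0^{(i)} - A) \ge q/(q-1)$ together with $v_q(a_k^{(i)}) \ge (q-k)/(q-1)$ for each $k = 1, \dots, q-2$. Since these are valuations of ordinary integers and $q \ge 3$, the rational lower bounds round up to $v_q(a_0^{(i)} - A) \ge 2$ and $v_q(a_k^{(i)}) \ge 1$ respectively, which is exactly the content of Part (1).

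For Part (2), I would run the identical computation with $(1-\zeta_q)^{q+1}$ in place of $(1-\zeta_q)^q$. The only bound that actually strengthens is the one at $k = 1$, where $v_q(a_1^{(i)}) \ge q/(q-1)$ now forces $v_q(a_1^{(i)}) \ge 2$, i.e.\ $a_1^{(i)} \equiv 0 \bmod q^2$; for $k = 0$ and $k \ge 2$ the new rational bounds $(q+1)/(q-1)$ and $(q+1-k)/(q-1)$ still round up to the same integer conditions $2$ and $1$ already present in Part (1) (this uses $q \ge 3$). Hence, granted Part (1), condition (a) of \cref{1} is equivalent to $\mu^{(i)} \equiv A \bmod (1-\zeta_q)^{q+1}$, and the full chain (a) $\Leftrightarrow$ (b) $\Leftrightarrow$ (c) then follows at once from \cref{orders}(2). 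There is no real conceptual obstacle; the only subtle point is the ceiling arithmetic on the fractional valuation bounds, and the argument hinges on the observation that exactly one of these ceilings — the one at $k = 1$ — actually jumps when the modulus is raised from $(1-\zeta_q)^q$ to $(1-\zeta_q)^{q+1}$.
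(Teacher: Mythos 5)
Your proposal is correct and follows essentially the same route as the paper: both reduce \cref{1} to \cref{orders} via the decomposition $\mu^{(i)}-(m-n)^{(q-1)q/2}=(a_0^{(i)}-(m-n)^{(q-1)q/2})+\sum_{k\ge1}a_k^{(i)}(1-\zeta_q)^k$ and the valuation identity (\ref{val}), the paper phrasing the key step for part (2) as $\mu^{(i)}-(m-n)^{(q-1)q/2}\equiv a_1^{(i)}(1-\zeta_q)\bmod(1-\zeta_q)^{q+1}$ rather than via explicit ceilings. Your ceiling arithmetic is just a more spelled-out version of the same computation.
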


\begin{proof}
(1) We note that 
\begin{align*}
\mu^{(i)}-(m-n)^{\frac{(q-1)q}{2}}=(a_0^{(i)}-(m-n)^{\frac{(q-1)q}{2}})+\sum_{k=1}^{q-2} a_k^{(i)}(1-\zeta_q)^k.
\end{align*}
Then the assertion follows from \cref{orders}-(1) and \cref{val}.

\medskip

\noindent
(2) By (1) we have
\begin{align*}
\mu^{(i)}-(m-n)^{\frac{(q-1)q}{2}}\equiv a_1^{(i)}(1-\zeta_q) \mod (1-\zeta_q)^{q+1}.
\end{align*}
Then the assertion follows from \cref{orders}-(2).
\end{proof}

\begin{proof}[\bf Proof of \cref{mainthm}]
Take $i$ so that $i\equiv \frac{-n}{m-n} \mod q$. Then $\sqrt[q]{\mu^{(i)}}$ is a generator of the Kummer extension $K(\zeta_q)/\QQ(\zeta_q)$ with $(1-\zeta_q) \nmid \mu^{(i)}$ by \cref{im}.
In this setting (\ref{lc}) states that 
\begin{center}
$( \tfrac{q}{p})_q=1$ if and only if $\mu^{(i)} \equiv \xi^q \mod (1-\zeta_q)^{q+1}$ for some $\xi \in \ZZ[\zeta_q]$.
\end{center}
By \cref{unram}-(2), it is further equivalent to 
\begin{align*}
a_0^{(i)} \equiv b^q \mod q^2 \text{ for some } b \in \ZZ, \ a_1^{(i)} \equiv 0 \mod q^2,\ a_2^{(i)},\dots,a_{q-2}^{(i)}\equiv 0 \mod q.
\end{align*}
By \cref{1}-(1), the assumption $i\equiv \frac{-n}{m-n} \mod q$ implies the conditions other than $ a_1^{(i)} \equiv 0 \mod q^2$.
Then \cref{mainthm} follows from \cref{1}-(2).
\end{proof}

\section{Calculation of $\mu^{(i)}$ modulo a power of $(1-\zeta_q)$} \label{calc}

In this section, we calculate $\mu^{(i)} \bmod (1-\zeta_q)^{q+1}$ and prove \cref{orders}.
To begin with, we study $\mu^{(i)} \bmod (1-\zeta_q)^2$.

\begin{prp} \label{muru}
Let $\mu^{(i)}$ be as in \cref{mu}.
\begin{enumerate}
\item $(m-n)^\frac{(q-1)q}{2} \equiv (\frac{m-n}{q})_2  \mod q^2$.
\item We have 
\begin{align*}
\mu^{(i)} \equiv (m-n)^\frac{(q-1)q}{2} \zeta_q^{i+n(m-n)^*} \mod (1-\zeta_q)^2.
\end{align*}
Moreover the following are equivalent.
\begin{enumerate}
\item $i \equiv \frac{-n}{m-n} \mod q$.
\item $\mu^{(i)} \equiv (m-n)^\frac{(q-1)q}{2} \mod (1-\zeta_q)^2$.
\end{enumerate}
\end{enumerate}
\end{prp}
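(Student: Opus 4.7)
\medskip

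\noindent
\textbf{Proof proposal.}

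For part (1), the idea is to combine Euler's criterion with a binomial expansion. Since $p = \Phi_q(n/m) m^{q-1}$ is assumed prime and not $q$, we have $m - n \not\equiv 0 \bmod q$ by \eqref{equiv1}. Euler's criterion then gives $(m-n)^{(q-1)/2} \equiv \varepsilon \bmod q$, where $\varepsilon := \bigl(\tfrac{m-n}{q}\bigr)_2 \in \{\pm 1\}$. Writing $(m-n)^{(q-1)/2} = \varepsilon + q\alpha$ with $\alpha \in \ZZ$ and raising to the $q$-th power, all binomial terms beyond the leading one are divisible by $q^2$, so $(m-n)^{q(q-1)/2} \equiv \varepsilon^q = \varepsilon \bmod q^2$ (the last equality uses that $q$ is odd and $\varepsilon = \pm 1$).

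For part (2), the plan is to expand $\mu^{(i)}$ directly modulo $(1-\zeta_q)^2$. The key arithmetic fact is that $v_q(q) = 1$ with $v_q(1-\zeta_q) = 1/(q-1)$, so $q(1-\zeta_q) \equiv 0 \bmod (1-\zeta_q)^q$ and in particular vanishes modulo $(1-\zeta_q)^2$ for $q \ge 3$. From $\zeta_q^j \equiv 1 - j(1-\zeta_q) \bmod (1-\zeta_q)^2$, one gets
\begin{align*}
(m - n\zeta_q^j)^{j^*} \equiv (m-n)^{j^*}\Bigl(1 + \tfrac{n j j^*}{m-n}(1-\zeta_q)\Bigr) \equiv (m-n)^{j^*}\Bigl(1 + \tfrac{n}{m-n}(1-\zeta_q)\Bigr) \bmod (1-\zeta_q)^2,
\end{align*}
where I use $j j^* \equiv 1 \bmod q$ together with the vanishing of $q(1-\zeta_q)$ observed above to absorb the error. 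Taking the product over $j = 1, \dots, q-1$ and using that $j \mapsto j^*$ is a bijection of $\{1,\dots,q-1\}$ (so $\sum j^* = q(q-1)/2$), together with $(1 + c(1-\zeta_q))^{q-1} \equiv 1 - c(1-\zeta_q) \bmod (1-\zeta_q)^2$, yields
\begin{align*}
\prod_{j=1}^{q-1}(m-n\zeta_q^j)^{j^*} \equiv (m-n)^{q(q-1)/2}\Bigl(1 - \tfrac{n}{m-n}(1-\zeta_q)\Bigr) \bmod (1-\zeta_q)^2.
\end{align*}
Multiplying by $\zeta_q^i \equiv 1 - i(1-\zeta_q)$ and replacing $n/(m-n)$ by $n(m-n)^* \bmod q$ (again justified by $q(1-\zeta_q) \equiv 0$) gives
\begin{align*}
\mu^{(i)} \equiv (m-n)^{q(q-1)/2}\bigl(1 - (i + n(m-n)^*)(1-\zeta_q)\bigr) \equiv (m-n)^{q(q-1)/2} \zeta_q^{i + n(m-n)^*} \bmod (1-\zeta_q)^2,
\end{align*}
which is the first claim of (2).

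Finally, for the equivalence (a)$\Leftrightarrow$(b), since $(m-n)^{q(q-1)/2}$ is a unit modulo $(1-\zeta_q)$, the congruence $\mu^{(i)} \equiv (m-n)^{q(q-1)/2} \bmod (1-\zeta_q)^2$ is equivalent to $\zeta_q^{i + n(m-n)^*} \equiv 1 \bmod (1-\zeta_q)^2$. For any integer $k$ with $1 \le k \le q-1$, the element $1 - \zeta_q^k = (1-\zeta_q)\cdot(1 + \zeta_q + \dots + \zeta_q^{k-1})$ differs from $1 - \zeta_q$ by a unit, so it has valuation exactly $1$, not $\ge 2$. Hence $\zeta_q^{i + n(m-n)^*} \equiv 1 \bmod (1-\zeta_q)^2$ forces $i + n(m-n)^* \equiv 0 \bmod q$, which is (a). The main obstacle in this proof is purely bookkeeping: keeping careful track of which terms involving $q$ actually vanish modulo $(1-\zeta_q)^2$, and using $v_q$ rather than the normalized $(1-\zeta_q)$-adic valuation consistently.
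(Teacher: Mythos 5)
Your proposal is correct and follows essentially the same route as the paper: part (1) is the identical Euler's-criterion-plus-binomial argument, and part (2) is the same first-order expansion modulo $(1-\zeta_q)^2$ (the paper keeps each factor in the form $\zeta_q^{-n(m-n)^*j}$ and multiplies exponents, while you linearize to $1+c(1-\zeta_q)$ and add coefficients — the same computation), with the same valuation argument $v_q(1-\zeta_q^k) = 1/(q-1) < 2/(q-1)$ establishing the equivalence (a)$\Leftrightarrow$(b).
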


\begin{proof}
(1) We note that $q \nmid (m-n)$ (\cref{equiv1}). 
By Euler's criterion for quadratic residue, we can write $(m-n)^{\frac{q-1}{2}}=(\frac{m-n}{q})_2 +qk$ with $k \in \ZZ$.
Then we have $(m-n)^{\frac{(q-1)q}{2}}=(\frac{m-n}{q})_2^q +q(\frac{m-n}{q})_2^{q-1}qk+\cdots \equiv  (\frac{m-n}{q})_2 \mod q^2$.

\medskip

\noindent
(2) We have
\begin{align*}
1-\zeta_q^{a}=(1-\zeta_q)(1+\zeta_q+\dots+\zeta_q^{(a-1)}) \equiv a(1-\zeta_q) \mod (1-\zeta_q)^2
\end{align*}
for $a \in \NN$.
It immediately follows that $1-\zeta_q^{ac^*b} \equiv \frac{a}{c}(1-\zeta_q^b) \mod (1-\zeta_q)^2$ for $a,b,c\in \ZZ$ with $q \nmid c$.
Hence we obtain the first congruence as follows:
\begin{equation} \label{tcl} 
\begin{split}
\mu^{(i)}
&=(m-n)^{\frac{(q-1)q}{2}}\cdot \zeta_q^i \prod_{j=1}^{q-1} (1+n(m-n)^{-1}(1-\zeta_q^j))^{j^*}  \\
&\equiv (m-n)^{\frac{(q-1)q}{2}}\cdot \zeta_q^i \prod_{j=1}^{q-1} (1-(1-\zeta_q^{-n(m-n)^*j}))^{j^*}  \mod  (1-\zeta_q)^2 \\
&=(m-n)^{\frac{(q-1)q}{2}}\cdot \zeta_q^{i+n(m-n)^*}. 
\end{split}
\end{equation}
For any primitive $k$-th root of unity $\zeta \neq 1$, we see that

\begin{align} \label{v(1-z)}
v_q(1-\zeta)=
\begin{cases}
1/(q^l-q^{l-1}) & (k=q^l,\ l\in \NN) \\
0 & (\text{otherwise}) 
\end{cases}
< 2/(q-1)=v_q((1-\zeta_q)^2).
\end{align}
Thus the equivalence (a) $\LR$ (b) follows.
\end{proof}

In order to prove \cref{orders}, we would like to refine \cref{muru}, from modulo $ (1-\zeta_q)^2$ to modulo $ (1-\zeta_q)^{e}$ ($e=q$, $q+1$).
For this purpose, the following Lemma is very useful since $\mu^{(i)}$ is defined as a product.
We denote the $q$-adic $\log$ function and the $q$-adic $\exp$ function by $\log_q$ and $\exp_q$ respectively (see, for example, \cite[Chap.~IV, \S 1]{Ko} for more details).
We denote by $\QQ_q$, $\overline{\QQ_q}$ and $\CC_q$, the field of $q$-adic numbers, the algebraic closure of $\QQ_q$ and the completion of $\overline{\QQ_q}$, respectively.

\begin{lmm} \label{nt}
Let $A \in \QQ(\zeta_q)$. 
Assume that 
\begin{align*}
A \equiv \zeta \mod (1-\zeta_q)^2
\end{align*}
for a root of unity $\zeta \in \overline{\QQ_q}$.
For $e \in \NN$ with $e\geq 2$, the following are equivalent.
\begin{enumerate}
\item $A \equiv \zeta \mod (1-\zeta_q)^e$.
\item $\log_q A \equiv 0 \mod (1-\zeta_q)^e$.
\end{enumerate}
\end{lmm}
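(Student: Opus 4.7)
The plan is to reduce the statement to the standard isometry property of the $q$-adic logarithm near $1$. First I would fix an embedding $\QQ(\zeta_q) \hookrightarrow \overline{\QQ_q}$ and set $x := A/\zeta - 1$. The hypothesis $A \equiv \zeta \mod (1-\zeta_q)^2$ means $v_q(A - \zeta) \geq 2/(q-1)$, and since $\zeta$ is a unit this yields $v_q(x) \geq 2/(q-1)$. In particular the series defining $\log_q(1+x)$ converges $q$-adically, and moreover condition (1), i.e.\ $A \equiv \zeta \mod (1-\zeta_q)^e$, is equivalent to $v_q(x) \geq e/(q-1)$.

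Next I would invoke additivity of $\log_q$ on $\overline{\QQ_q}^\times$ together with the vanishing $\log_q \zeta = 0$ for every root of unity $\zeta$, which follows from $N \log_q \zeta = \log_q \zeta^N = \log_q 1 = 0$, where $N$ is the order of $\zeta$. This yields
\begin{align*}
\log_q A = \log_q \zeta + \log_q(1+x) = \log_q(1+x),
\end{align*}
reducing the problem to proving $v_q(\log_q(1+x)) = v_q(x)$ whenever $v_q(x) \geq 2/(q-1)$.

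For the isometry I would argue term by term: for every $n \geq 2$,
\begin{align*}
v_q(x^n/n) - v_q(x) = (n-1)\, v_q(x) - v_q(n) \geq \frac{2(n-1)}{q-1} - v_q(n),
\end{align*}
and this lower bound is strictly positive for all such $n$. The tightest case is $n = q$, where it equals $\frac{2(q-1)}{q-1} - 1 = 1$; for $n = q^k m$ with $\gcd(m,q) = 1$ and $k \geq 1$ one uses $\frac{2(q^k-1)}{q-1} = 2(1 + q + \cdots + q^{k-1}) \geq 2k > k$, while for $\gcd(n,q)=1$ there is nothing to check. Hence the $n=1$ term dominates in the series for $\log_q(1+x)$, giving $v_q(\log_q(1+x)) = v_q(x)$. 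Comparing this common valuation to $e/(q-1) = v_q((1-\zeta_q)^e)$ gives the required equivalence (1) $\LR$ (2).

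There is no genuine obstacle here; the only subtlety is careful bookkeeping of $q$-adic valuations to verify the strict inequalities above, while the structural inputs (additivity of $\log_q$ on $\overline{\QQ_q}^\times$ and its vanishing on roots of unity) are standard and can be cited from \cite[Chap.~IV, \S 1]{Ko}.
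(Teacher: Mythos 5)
Your proof is correct, and it takes a somewhat different route from the paper's for the harder direction. Both arguments ultimately rest on the same two facts about the Iwasawa logarithm --- it vanishes on roots of unity and it respects the filtration by powers of $(1-\zeta_q)$ near $1$ --- but you establish the two-sided valuation identity $v_q(\log_q(1+x))=v_q(x)$ for $v_q(x)\geq 2/(q-1)$ directly from the power series, which disposes of both implications simultaneously. The paper instead cites the surjectivity of $\log_q$ and $\exp_q$ between $\{z \mid v_q(z)\geq v\}$ and $\{1+z \mid v_q(z)\geq v\}$ for $v>1/(q-1)$; the implication $(1)\Rightarrow(2)$ is then immediate, but for $(2)\Rightarrow(1)$ it must compare $A$ with $\exp_q(\log_q A)$, which produces an a priori unknown root of unity $\zeta'$ with $A\zeta'\equiv 1 \bmod (1-\zeta_q)^e$, subsequently identified with $\zeta^{-1}$ via the bound $v_q(1-\zeta)<2/(q-1)$ for roots of unity $\zeta\neq 1$ (\cref{v(1-z)}). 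Your term-by-term estimate makes $\exp_q$ and that disambiguation step unnecessary, at the cost of reproving by hand an isometry that is usually quoted; the only point worth stating explicitly is that the excesses $v_q(x^n/n)-v_q(x)$ for $n\geq 2$ are bounded below uniformly by $\min(2/(q-1),1)>0$, so the $n=1$ term strictly dominates the entire tail of the convergent series.
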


\begin{proof}
By properties of $\log_q$ and $\exp_q$, we have
\begin{enumerate}
\item[(a)] $\log_q(\{1+z \in \CC_q \mid v_{q}(z) \geq v\})=\{z \in \CC_q\mid v_{q}(z) \geq v\}$  ($v> 1/(q-1)$).
\item[(b)] $\exp_q(\{z \in \CC_q\mid v_{q}(z) \geq v\})=\{1+z \in \CC_q \mid v_{q}(z) \geq v\}$  ($v> 1/(q-1)$).
\item[(c)] $z/\exp_q(\log_q(z)) \in \{\alpha \in \CC_q^\times \mid \text{there exist $k\in \NN$, $l \in \ZZ$ satisfying }\alpha^k=q^l \}$ ($z \in \CC_q^\times$).
\end{enumerate}
By (a), we see that (1) implies $\log_q A \equiv \log_q(\zeta) \equiv 0 \mod (1-\zeta_q)^e$.
We prove the converse.
We note that the assumption $A \equiv \zeta \mod (1-\zeta_q)^2$ implies that $A$ is a $q$-unit.
Then, by (b), (c) and (2), there exists a root of unity $\zeta'$ satisfying
\begin{align*}
A\zeta'=\exp_q(\log_q(A)) \equiv 1 \mod (1-\zeta_q)^e,
\end{align*}
which implies $\zeta \zeta' \equiv 1 \bmod (1-\zeta_q)^2$.
Hence, by \cref{v(1-z)}, we have $\zeta \zeta'=1$ and $A \equiv \zeta'^{-1}\equiv \zeta \mod (1-\zeta_q)^e$ as desired. 
\end{proof}

We recall some formulas for binomial coefficients, which we use in order to prove \cref{orders}. 

\begin{prp} \label{bc}
\begin{enumerate}
\item $\binom{t}{k}=0$ ($k>t$).
\item $\sum_{k=0}^t (-1)^k\binom{t}{k}=
\begin{cases}
0 & (t\geq 1) \\
1 & (t=0)
\end{cases}$.
\item For $a \in \ZZ_{(q)}$ we have
\begin{align*}
\sum_{\substack{1\leq t \leq q \\ 1\leq k \leq q-1}} (-1)^{k}\binom{t-1}{k-1}a ^t \equiv 0 \mod q.
\end{align*}
\end{enumerate}
\end{prp}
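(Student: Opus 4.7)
Plan for \cref{bc}. Parts (1) and (2) are classical: (1) is the usual convention that $\binom{t}{k}$ counts $k$-subsets of a $t$-element set and so vanishes for $k>t$, while (2) is the binomial expansion of $(1-1)^t=0^t$, equal to $0$ for $t\ge 1$ and to $1$ for $t=0$. I would record these only as preparation for (3).

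For (3), the plan is to swap the order of summation, putting the $a^t$-sum outside, and after reindexing by $j:=k-1$ to study the quantity $\sum_{j=0}^{q-2}(-1)^{j}\binom{t-1}{j}$ for each $t$ with $1\le t\le q$. For $1\le t\le q-1$, part~(1) allows me to truncate the upper bound of the inner sum to $t-1$ without changing its value, and then (2) forces the inner sum to be $1$ when $t=1$ and $0$ for $2\le t\le q-1$. For the exceptional value $t=q$, the upper bound $q-2$ is one short of the full range $0\le j\le q-1$ on which the alternating binomial sum vanishes by (2), so what survives is an isolated boundary term of size $\pm 1$.

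Collecting contributions, the entire double sum becomes a $\ZZ$-linear combination of $a$ and $a^{q}$ with small explicit coefficients. Fermat's little theorem in the form $a^q\equiv a\bmod q$, extended from $\ZZ$ to $\ZZ_{(q)}$ by clearing the denominator of $a$, then reduces this expression modulo $q$ to an integer multiple of $a$; a one-line case split on the parity of $q$ (odd $q$ versus $q=2$) shows that the coefficient is divisible by $q$.

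There is no real obstacle in this argument; the only non-automatic step is isolating the boundary case $t=q$, where the inner summation range is tight and the sum does not collapse purely via (1) and (2), so this single term has to be evaluated by hand before Fermat's little theorem finishes the job.
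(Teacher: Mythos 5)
Your proposal is correct and follows essentially the same route as the paper: split the $t$-sum into the cases $t=1$, $2\le t\le q-1$, and $t=q$, use (1) and (2) to evaluate the inner alternating sum (with the boundary term at $t=q$ handled separately), and conclude via $a^q\equiv a\bmod q$ on $\ZZ_{(q)}$. Your extra care about extending Fermat's little theorem to $\ZZ_{(q)}$ and about the parity of $q$ is a harmless refinement of the paper's one-line computation $-a+0+a^q\equiv 0\bmod q$.
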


\begin{proof}
(1), (2) are well-known.

\medskip

\noindent
(3) Dividing $\sum_{\substack{1\leq t \leq q \\ 1\leq k \leq q-1}} $ into 
$\sum_{\substack{ t=1 \\ 1\leq k \leq q-1}}+\sum_{\substack{2\leq t \leq q-1 \\ 1\leq k \leq q-1}} +\sum_{\substack{t =q \\ 1\leq k \leq q-1}}$ and applying (1) and (2) we have
\begin{align*}
\sum_{\substack{1\leq t \leq q \\ 1\leq k \leq q-1}} (-1)^{k}\binom{t-1}{k-1}a ^t =-a +0+ a ^q \equiv 0 \mod q
\end{align*}
as desired.
\end{proof}

The following properties of the polynomial $f_q(x)$ are key ingredients of our proof of \cref{orders}.

\begin{prp} \label{fqgq} 
Let $f_q(x)$, $S_q$ be as in \cref{dfn} and $\mu^{(i)}$ as in \cref{mu}.
\begin{enumerate}
\item $f_q(x) \equiv x-x^q \mod q$. In particular we have $f_q(c) \equiv 0 \mod q$ for every $c\in \ZZ_{(q)}$.
\item $S_q=\{b \in \ZZ/q^2\ZZ \mid f_q(b)\equiv 0 \bmod q^2\}$.
\item $\log_q (\mu^{(i)})\equiv (1-\zeta_q) f_q(\frac{-n}{m-n}) \mod (1-\zeta_q)^{q+1}$.
In particular we have the following statements.
\begin{enumerate}
\item $\log_q (\mu^{(i)})\equiv 0 \mod (1-\zeta_q)^q$.
\item $\log_q (\mu^{(i)})\equiv 0 \mod (1-\zeta_q)^{q+1}$ if and only if $f_q(\tfrac{-n}{m-n}) \in q^2\ZZ_{(q)}$.
\end{enumerate}
\end{enumerate}
\end{prp}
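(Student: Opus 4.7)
The plan is to prove the three parts in order; (1) and (2) are short combinatorial arguments and (3) is the main work.

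For (1), I would reduce the coefficient of $\tfrac{x^t}{t}$ in $f_q(x)$ modulo $q$. Since the map $j\mapsto[kj^*]_q$ is a bijection on $\{1,\ldots,q-1\}$ for each $k\in\{1,\ldots,q-1\}$, the inner sum over pairs $(j,l)$ with $jl\equiv k\pmod q$ has $q-1$ terms each $\equiv k\pmod q$, giving $\sum_{jl\equiv k}jl\equiv -k\pmod q$. This reduces the coefficient to $-\sum_{k=1}^{q-1}(-1)^k\binom{t}{k}k\pmod q$; using $k\binom{t}{k}=t\binom{t-1}{k-1}$ together with \cref{bc}, this equals $-1$, $0$, or $q$ according as $t=1$, $2\le t\le q-1$, or $t=q$, yielding $f_q(x)\equiv x-x^q\pmod q$. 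Fermat then gives $f_q(c)\equiv 0\pmod q$.

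For (2), both inclusions follow from the Taylor expansion
\begin{align*}
f_q(a-f_q(a))\equiv f_q(a)\bigl(1-f_q'(a)\bigr)\pmod{f_q(a)^2}
\end{align*}
combined with $f_q(a)\equiv 0\pmod q$ and $f_q'(a)\equiv 1\pmod q$ from (1): the forward direction gives $f_q(a-f_q(a))\equiv 0\pmod{q^2}$, and the converse, writing $b=a+qc$ with $a\in\{0,\ldots,q-1\}$, forces $qc\equiv -f_q(a)\pmod{q^2}$, so $b\equiv a-f_q(a)\pmod{q^2}$.

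Part (3) is the heart. Setting $x=\tfrac{-n}{m-n}$ and factoring
\begin{align*}
\mu^{(i)}=\zeta_q^i(m-n)^{(q-1)q/2}\prod_{j=1}^{q-1}\bigl(1-x(1-\zeta_q^j)\bigr)^{j^*},
\end{align*}
I would apply $\log_q$, drop the vanishing $\log_q(\zeta_q^i)$ and the term $\tfrac{(q-1)q}{2}\log_q(m-n)$ (whose $(1-\zeta_q)$-valuation is at least $2(q-1)\ge q+1$ for $q\ge 3$), expand the remaining logarithms in Taylor series, and truncate at $t=q$ by a valuation count on $x^t(1-\zeta_q^j)^t/t$. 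The identity then reduces to
\begin{align*}
-\sum_{t=1}^{q}\frac{x^t}{t}\sum_{j=1}^{q-1}j^*(1-\zeta_q^j)^t\equiv(1-\zeta_q)f_q(x)\pmod{(1-\zeta_q)^{q+1}}.
\end{align*}
Expanding $(1-\zeta_q^j)^t=\sum_{k=0}^t(-1)^k\binom{t}{k}\zeta_q^{jk}$, swapping sums, using the bijection $l\equiv jk\pmod q$ to rewrite $\sum_j j^*\zeta_q^{jk}=\sum_l[l^*k]_q\zeta_q^l$ for $1\le k\le q-1$, and expanding $\zeta_q^l=\sum_{r=0}^l(-1)^r\binom{l}{r}(1-\zeta_q)^r$, I obtain
\begin{align*}
\sum_{j=1}^{q-1}j^*(1-\zeta_q^j)^t=\sum_{r=1}^{q-1}(-1)^r(1-\zeta_q)^rE_{t,r},\quad E_{t,r}=\sum_{k=1}^{q-1}(-1)^k\binom{t}{k}\sum_{l=1}^{q-1}\binom{l}{r}[l^*k]_q,
\end{align*}
the $r=0$ term cancelling via $\sum_{k=0}^q(-1)^k\binom{t}{k}=0$. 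The $r=1$ layer contributes $(1-\zeta_q)\sum_t\tfrac{x^t}{t}E_{t,1}$, and since $E_{t,1}=\sum_k(-1)^k\binom{t}{k}B_k$ with $B_k=\sum_{jl\equiv k}jl$ this reproduces $(1-\zeta_q)f_q(x)$ exactly.

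The main obstacle is controlling the tail $r\ge 2$. Reducing $[l^*k]_q\equiv l^*k\pmod q$ factorises $E_{t,r}\equiv S_r\sum_{k=1}^{q-1}(-1)^k\binom{t}{k}k\pmod q$ with $S_r=\sum_l\binom{l}{r}l^*$, and by the argument in (1) the second factor vanishes modulo $q$ for $2\le t\le q-1$, so the $(t,r)$-contributions for these $t$ already lie in $(1-\zeta_q)^{q+1}$. For $t=q$ each $\binom{q}{k}$ is divisible by $q$, and a refined computation using $\sum_{k=1}^{q-1}(-1)^k\binom{q-1}{k-1}=1$ gives $E_{q,r}\equiv qS_r\pmod{q^2}$. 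Combining the $t=1$ and $t=q$ contributions at fixed $r$ then yields $(-1)^rS_r(x-x^q)(1-\zeta_q)^r$ modulo negligible error, and since $v_q(x^q-x)\ge 1$ by Fermat for the $q$-adic integer $x=\tfrac{-n}{m-n}$, this lies in $(1-\zeta_q)^{(q-1)+r}\subseteq(1-\zeta_q)^{q+1}$ for $r\ge 2$. The corollaries (a) and (b) follow immediately from the identity.
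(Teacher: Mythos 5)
Your proof is correct. Parts (1) and (2) follow the paper in substance: the same reduction $\sum_{jl\equiv k}jl\equiv -k\bmod q$ and the same Taylor expansion of $f_q$ around $a$ (the paper settles the reverse inclusion in (2) by counting roots via Hensel's lemma, whereas you verify it directly by writing $b=a+qc$; both work). Note one sign slip: your intermediate values ``$-1$, $0$, or $q$'' should read $1$, $0$, $-q$, though your stated conclusion $f_q(x)\equiv x-x^q$ is the right one. In part (3) the two arguments share the same skeleton --- factor out $\zeta_q^i(m-n)^{(q-1)q/2}$, apply $\log_q$, truncate the series at $t=q$ by a valuation count, and recognize the coefficient of $(1-\zeta_q)$ as $f_q(\tfrac{-n}{m-n})$ --- but the error control is organized differently. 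You expand $\zeta_q^l$ fully in powers of $(1-\zeta_q)$ and bound each layer $r\ge 2$ separately, which forces you to pair the $t=1$ and $t=q$ contributions inside every layer and to prove the refined congruence $E_{q,r}\equiv qS_r\bmod q^2$. The paper instead groups the sum as $\sum_l C_l(1-\zeta_q^l)=(1-\zeta_q)\sum_l C_l(1+\cdots+\zeta_q^{l-1})$ and proves once that every coefficient $C_l$ lies in $q\ZZ_{(q)}$ (\cref{bc}-(3), which is exactly your $t=1$ versus $t=q$ Fermat cancellation $-\alpha+\alpha^q\equiv 0\bmod q$ in disguise); this single divisibility then absorbs all higher layers at once via $1+\cdots+\zeta_q^{l-1}\equiv l\bmod(1-\zeta_q)$. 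Your route is more explicit but computationally heavier; the paper's is lighter because the divisibility of $C_l$ does the work of all your $r\ge 2$ estimates simultaneously. I see no gap in either version.
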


\begin{proof}
(1) By \cref{bc}-(2) we see that
\begin{align*}
\sum_{\substack{1\leq j,k,l \leq q-1 \\ k \equiv jl \mod q} (-1)^k\binom{t}{k}\frac{jl}{t}} 
\equiv -\sum_{1\leq k \leq q-1} (-1)^k\binom{t-1}{k-1} 
=
\begin{cases}
1 & (t=1) \\
-1 & (t=q) \\
0 & (1<t<q)
\end{cases}
\mod q.
\end{align*}
Then the assertion follows.

\medskip

\noindent
(2) For $0\leq a \leq q-1$, we define $b(a)\in \ZZ_{(q)}$ and $\alpha_d(a) \in \ZZ_{(q)}$ ($0\leq d \leq q$) by
\begin{align*}
b(a)=a-f_q(a), \quad f_q(x)=\sum_{d=0}^{q} \alpha_d(a) (x-a)^d.
\end{align*}
By (1) we have $\alpha_0(a)=f_q(a) \equiv 0 \mod q$.
Since (1) implies $f_q'(x) \equiv 1 \mod q$, we also have $\alpha_1(a)=f_q'(a) \equiv 1 \mod q$.
It follows that 
\begin{align*}
f_q(b(a))=\sum_{d=0}^{q} \alpha_d (a)(-f_q(a))^d \equiv 0 \mod q^2.
\end{align*}
Thus we have
\begin{align*}
S_q=\{b(a) \ \bmod \ q^2 \mid 0\leq a \leq q-1\} \subset \{b \in \ZZ/q^2\ZZ \mid f_q(b)\equiv 0 \bmod q^2\}.
\end{align*}
The last set consists of exactly $q$ elements by (1) and Hensel's lemma.
We also have $|S_q|=q$ since $b(0),b(1),\dots b(q-1)$ are pairwise distinct modulo $q$ by (1), 
and hence pairwise distinct modulo $q^2$.
Then the assertion holds.

\medskip

\noindent
(3) Let $\alpha:=-n(m-n)^{-1} \in \ZZ_{(q)}$. Taking $\log_q$ of the first line of \cref{tcl}, we have
\begin{align*}
\log_q (\mu^{(i)})\equiv \sum_{j=1}^{q-1} j^*\log_q (1-\alpha(1-\zeta_q^j)) \mod q^2
\end{align*}
since \cref{muru}-(1) implies $\log_q((m-n)^{\frac{(q-1)q}{2}})\equiv \log_q(\pm 1)=0 \mod q^2$.
By the definition of $\log_q$ as the usual power series and the binomial expansion, this becomes
\begin{equation} \label{mu1}
\begin{split}
\log_q (\mu^{(i)})&\equiv \sum_{j=1}^{q-1} \sum_{t=1}^q \frac{-j^*(\alpha(1-\zeta_q^{j}))^t}{t} \mod (1-\zeta_q)^{q+1} \\
&=\sum_{\substack{1\leq j \leq q-1 \\ 1\leq t \leq q \\ 0\leq k \leq t}} \frac{-(-1)^kj^*\binom{t}{k}\alpha^t\zeta_q^{jk}}{t} \\
&= \sum_{l=0}^{q-1}\left(\sum_{\substack{1\leq j \leq q-1 \\ 1\leq t \leq q \\ 0\leq k \leq t \\ k \equiv jl \mod q}} \frac{-(-1)^kj\binom{t}{k}\alpha^t}{t} \right)\zeta_q^{l}. 
\end{split}
\end{equation}
In the last equality, we replace $l$ to $jk$ and $j$ to $j^*$.
On the other hand, since there exists a unique $0\leq l \leq q-1$ satisfying $k \equiv jl \mod q$ for each $j,k$, we see that  
\begin{align*}
\sum_{l=0}^{q-1} \left(\sum_{\substack{1\leq j \leq q-1 \\ 1\leq t \leq q \\ 0\leq k \leq t \\ k \equiv jl \mod q}} \frac{-(-1)^kj\binom{t}{k}\alpha^t}{t} \right) \cdot 1
=\sum_{\substack{1\leq j \leq q-1 \\ 1\leq t \leq q}} \frac{-j\alpha^t}{t} \sum_{k=0}^t (-1)^k\binom{t}{k}=0
\end{align*}
by \cref{bc}-(2).
Hence from \cref{mu1} we obtain 
\begin{align*}
\log_q (\mu^{(i)})&\equiv\sum_{l=1}^{q-1}\left(\sum_{\substack{1\leq j \leq q-1 \\ 1\leq t \leq q \\ 0\leq k \leq t \\ k \equiv jl \mod q}} \frac{(-1)^kj\binom{t}{k}\alpha^t}{t} \right) (1-\zeta_q^{l})  \mod (1-\zeta_q)^{q+1} \\
&= (1-\zeta_q)\sum_{l=1}^{q-1}\left(\sum_{\substack{1\leq t \leq q \\ 1\leq j,k \leq q-1 \\ k \equiv jl \mod q}} \frac{(-1)^kj\binom{t}{k}\alpha^t}{t} \right) (1+\dots+\zeta_q^{l-1}). 
\end{align*}
For modification of the range of $k$, we used \cref{bc}-(1) and $k\neq 0,q$ which follows from $k \equiv jl\not \equiv  0 \mod q$.
Finally, by \cref{bc}-(3), we have  
\begin{align*}
\sum_{\substack{1\leq t \leq q \\ 1\leq j,k \leq q-1 \\ k \equiv jl \mod q}} \frac{(-1)^kj\binom{t}{k}\alpha^t}{t} &=
\sum_{\substack{1\leq t \leq q \\ 1\leq j,k \leq q-1 \\ k \equiv jl \mod q}} \frac{(-1)^kj\binom{t-1}{k-1}\alpha^t}{k} 
\equiv \sum_{\substack{1\leq t \leq q \\ 1\leq k \leq q-1}} \frac{(-1)^k\binom{t-1}{k-1}\alpha^t}{l} \equiv 0 \mod q,
\end{align*}
which implies that
\begin{align*}
\log_q (\mu^{(i)})&\equiv (1-\zeta_q)\left(\sum_{\substack{1\leq t \leq q \\ 1\leq j,k,l \leq q-1 \\ k \equiv jl \mod q}} \frac{(-1)^k j \binom{t}{k}\alpha^t}{t}\right) \cdot l = (1-\zeta_q) f_q(\alpha) \mod (1-\zeta_q)^{q+1}. 
\end{align*}
as desired. Then (b) is clear and (a) follows from (1).
\end{proof}

\begin{proof}[\bf Proof of \cref{orders}]
We assume that $i\equiv \frac{-n}{m-n} \mod q$.
Then $\mu^{(i)} \equiv (m-n)^{\frac{(q-1)q}{2}} \equiv (\frac{m-n}{q})_2 \mod (1-\zeta_q)^2$ by \cref{muru}.
Hence we obtain $\mu^{(i)} \equiv (\frac{m-n}{q})_2 \mod (1-\zeta_q)^q$ by \cref{nt} and \cref{fqgq}-(3)-(a).
Thus \cref{orders}-(1) follows from \cref{muru}-(1).

For \cref{orders}-(2), 
by applying a similar argument based on \cref{muru} and \cref{nt}, we see that (a) is equivalent to $\log_q (\mu^{(i)})\equiv 0 \mod (1-\zeta_q)^{q+1}$.
This is further equivalent to (b) by \cref{fqgq}-(3)-(b).
The equivalence (b) $\LR$ (c) follows from \cref{fqgq}-(2).
\end{proof}

\section{Relation to the polylogarithm $\mathrm{Li}_{1-q}(z)$} \label{polylog}
In this section, we provide an explicit relation between $f_q(x)$ and the polylogarithm $\mathrm{Li}_{1-q}(x)$.
Then we derive certain symmetric properties of $f_q(x)$ and $S_q$ from a functional equation of $\mathrm{Li}_{1-q}(x)$.

\begin{dfn}
\begin{enumerate}
\item The polylogarithm $\mathrm{Li}_{-s}(x)$ with $0\leq s \in \ZZ$ is defined by
\begin{align*}
\mathrm{Li}_{-s}(x)=\left(x\frac{d}{dx}\right)^s \frac{x}{1-x} \in \QQ(x).
\end{align*} 
\item We define for $0\leq s \in \ZZ$ 
\begin{align*}
\mathcal F_s(x):=\sum_{t=0}^s \sum_{\substack{s_1,\dots,s_t\geq 1 \\ s_1+\dots+s_t=s}} \binom{s}{s_1,\dots,s_t}x^t \in \ZZ[x] \quad \text{for} \quad \binom{s}{s_1,\dots,s_t}:=\frac{s!}{s_1! \cdots s_t !}.
\end{align*}
\end{enumerate}
\end{dfn}

\begin{prp} \label{FLi}
We have for $0\leq s \in \ZZ$
\begin{align*}
\mathcal F_s(x)=
\begin{cases}
1 & (s=0), \\
\frac{1}{x+1} \mathrm{Li}_{-s}(\frac{x}{1+x}) & (s\geq 1).
\end{cases}
\end{align*}
\end{prp}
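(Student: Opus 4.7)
The case $s=0$ is immediate: the only contribution to $\mathcal F_0(x)$ comes from $t=0$ (the empty composition), giving $\mathcal F_0(x)=1$. For $s\geq 1$, my plan is to route through the standard Stirling-number expansion of $\mathrm{Li}_{-s}$. Writing $S(s,k)$ for the Stirling numbers of the second kind, I would first establish the formal-power-series identity
\begin{equation*}
\mathrm{Li}_{-s}(y) \;=\; \sum_{n\geq 1} n^s\, y^n \;=\; \sum_{k=1}^{s} k!\, S(s,k)\,\frac{y^k}{(1-y)^{k+1}},
\end{equation*}
by expanding $(y\tfrac{d}{dy})^s\tfrac{y}{1-y}$ term by term, substituting the classical identity $n^s=\sum_{k=1}^{s}k!\binom{n}{k}S(s,k)$, and applying $\sum_{n\geq k}\binom{n}{k}y^n=y^k/(1-y)^{k+1}$.

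The combinatorial bridge to $\mathcal F_s$ is the identity
\begin{equation*}
k!\,S(s,k) \;=\; \sum_{\substack{s_1,\dots,s_k\geq 1 \\ s_1+\cdots+s_k=s}} \binom{s}{s_1,\dots,s_k},
\end{equation*}
both sides of which count surjections $\{1,\dots,s\}\twoheadrightarrow\{1,\dots,k\}$: the right-hand side classifies such a surjection by the ordered tuple of its fibre sizes. Under the substitution $y=x/(1+x)$ one has $1-y=1/(1+x)$, whence $y^k/(1-y)^{k+1}=x^k(1+x)$. Dividing the displayed expansion of $\mathrm{Li}_{-s}(y)$ by $1+x$ therefore gives $\sum_{k=1}^{s} k!\,S(s,k)\,x^k$, which by the combinatorial identity equals $\mathcal F_s(x)$. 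Since both sides of the asserted identity lie in $\QQ(x)$, the formal-power-series equality establishes the rational-function equality.

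I do not anticipate a serious obstacle. The only point requiring some care is the manipulation of formal power series (term-by-term differentiation, interchange of summations), but all series involved converge in the $(y)$-adic topology of $\QQ\llbracket y\rrbracket$, so this is routine. A self-contained alternative avoiding Stirling numbers is induction on $s$: the identity $\mathrm{Li}_{-s}(y)=y\tfrac{d}{dy}\mathrm{Li}_{-(s-1)}(y)$ combined with the chain-rule relation $y\tfrac{d}{dy}=x(1+x)\tfrac{d}{dx}$ under $y=x/(1+x)$ translates to the recursion $\mathcal F_s(x)=x(1+x)\mathcal F_{s-1}'(x)+x\mathcal F_{s-1}(x)$, which one verifies directly from the definition of $\mathcal F_s$ via Stirling's recurrence $S(s,k)=k\,S(s-1,k)+S(s-1,k-1)$.
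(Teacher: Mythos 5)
Your proof is correct, but it takes a genuinely different route from the paper. The paper proves the identity by packaging all $s$ at once into exponential generating functions in an auxiliary variable $y$: it shows $\sum_{s\ge 0}\mathrm{Li}_{-s}(x)\frac{y^s}{s!}=\frac{xe^y}{1-xe^y}$ and $\sum_{s\ge 0}\mathcal F_s(x)\frac{y^s}{s!}=\frac{1}{1-x(e^y-1)}$, and then checks that the substitution $x\mapsto \frac{x}{1+x}$ followed by division by $1+x$ carries the first closed form to the second. You instead work at fixed $s$, expanding $\mathrm{Li}_{-s}(y)=\sum_{k=1}^{s}k!\,S(s,k)\,y^k/(1-y)^{k+1}$ via Stirling numbers of the second kind, observing that $y=\frac{x}{1+x}$ turns each term $y^k/(1-y)^{k+1}$ into $x^k(1+x)$, and identifying $k!\,S(s,k)$ with the inner sum in the definition of $\mathcal F_s$ by counting surjections with prescribed fibre sizes. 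All the steps check out: the $t=0$ term of $\mathcal F_s$ vanishes for $s\ge 1$, the Stirling expansion of $n^s$ and the identity $\sum_{n\ge k}\binom{n}{k}y^n=y^k/(1-y)^{k+1}$ are standard, and since everything is a rational function the formal computation suffices. Your surjection-counting step is essentially the same observation the paper makes later in the proof of the identity $\mathcal F_s(x)=\sum_t\bigl(\sum_k(-1)^{t-k}\binom{t}{k}k^s\bigr)x^t$, so it meshes well with the rest of the paper. What each approach buys: the paper's generating-function computation is shorter and avoids introducing Stirling numbers, while yours is more termwise and makes the polynomial structure of $\mathcal F_s$ (degree $s$, coefficients $k!\,S(s,k)$) explicit. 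Your proposed inductive alternative also works, but note the base case must be $s=1$ (checked directly as $G_1=x(1+x)$), since the claimed relation $(1+x)\mathcal F_s=\mathrm{Li}_{-s}(\frac{x}{1+x})$ genuinely fails at $s=0$.
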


\begin{proof}
This is a well-known identity.
We prove it by using the generating series:
\begin{align}
\sum_{s=0}^\infty \mathrm{Li}_{-s} (x) \frac{y^s}{s!}&=\frac{xe^y}{1-xe^y}, \label{gsofLi} \\
\sum_{s=0}^\infty \mathcal F_s(x) \frac{y^s}{s!}&=\frac{1}{1-x(e^y-1)}. \notag
\end{align}
The former one follows from
\begin{align*}
\left.\left(\frac{d}{dy}\right)^s\frac{xe^y}{1-xe^y}\right|_{y=0}=\left.\left(u\frac{d}{du}\right)^s \frac{u}{1-u} \right|_{u=x} \quad (u:=xe^y)
\end{align*}
and the latter one follows since
\begin{align*}
\sum_{s=0}^\infty \mathcal F_s(x) \frac{y^s}{s!}&=\sum_{t=0}^s x^t \sum_{s=t}^\infty \sum_{\substack{s_1,\dots,s_t\geq 1 \\ s_1+\dots+s_t=s}} \binom{s}{s_1,\dots,s_t}\frac{y^s}{s!} 
=\sum_{t=0}^s x^t \sum_{s_1,\dots,s_t\geq 1} \frac{y^{s_1+\dots+s_t}}{s_1! \cdots s_t !}\\
&=\sum_{t=0}^\infty x^t(e^y-1)^t=\frac{1}{1-x(e^y-1)}.
\end{align*}
Now we have
\begin{align*}
1+\sum_{s=1}^\infty \frac{1}{1+x} \mathrm{Li}_{-s}\left(\frac{x}{1+x}\right) \frac{y^s}{s!}=1+\frac{1}{1+x}\left(\frac{xe^y}{1+x-xe^y} -x\right)=\sum_{s=0}^\infty \mathcal F_s(x) \frac{y^s}{s!}
\end{align*}
as desired.
\end{proof}

\begin{prp} \label{tipsforpl}
\begin{enumerate}
\item We have for $0\leq s \in \ZZ$
\begin{align*}
\mathcal F_s(x)=\sum_{t=0}^s \left(\sum_{k=0}^t (-1)^{t-k} \binom{t}{k}k^s\right) x^t.
\end{align*}
\item We have for $k \in \ZZ$
\begin{align*}
\sum_{\substack{1\leq j,l \leq q-1 \\ jl\equiv k \mod q}} jl \equiv k^q \sum_{j=1}^{q-1} j j^* \mod q^2.
\end{align*}
\item We have
\begin{align*}
f_q(-x) \equiv c_q \int_0^x \mathcal F_q(x)\frac{dx}{x} \mod q^2 \quad \text{for} \quad c_q:=\sum_{j=1}^{q-1} jj^*.
\end{align*}
\end{enumerate}
\end{prp}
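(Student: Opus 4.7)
For Part (1), I would start from the generating-function identity
\[
\sum_{s\geq 0}\mathcal F_s(x)\frac{y^s}{s!}=\frac{1}{1-x(e^y-1)}=\sum_{t\geq 0}x^t(e^y-1)^t
\]
given in \cref{FLi}, expand $(e^y-1)^t=\sum_{k=0}^t(-1)^{t-k}\binom{t}{k}e^{ky}$ by the binomial theorem, and read off the coefficient of $y^s/s!$; the truncation at $t=s$ is automatic from $(e^y-1)^t=y^t+O(y^{t+1})$.

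For Part (2), I would first dispose of the case $q\mid k$ (both sides are $0\bmod q^2$) and then, for $q\nmid k$, parametrize pairs $(j,l)$ with $jl\equiv k\bmod q$ by $j\in\{1,\dots,q-1\}$ via $l=kj^*-q\lfloor kj^*/q\rfloor$, giving
\[
\sum_{jl\equiv k}jl=k c_q-q A,\qquad A:=\sum_{j=1}^{q-1}j\lfloor kj^*/q\rfloor.
\]
Writing $k^q=k+qk\,q_q(k)$ with $q_q(k):=(k^{q-1}-1)/q$ the Fermat quotient, and using $c_q\equiv-1\bmod q$ (since $jj^*\equiv 1\bmod q$), the claim reduces to $A\equiv k\,q_q(k)\bmod q$. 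After the bijection $j\leftrightarrow j^*$, this follows from the classical mod-$q^2$ expansion of $\prod_{j=1}^{q-1}(jk)=(q-1)!\,k^{q-1}$: writing each factor as $q\lfloor jk/q\rfloor+\overline{jk}$ with $\overline{jk}\in\{1,\dots,q-1\}$ and using $\prod_j\overline{jk}=(q-1)!$ yields $q_q(k)\equiv\sum_j\lfloor jk/q\rfloor/\overline{jk}\bmod q$, equivalent to the needed formula after clearing $k^{-1}$.

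For Part (3), I would substitute (2) into the defining sum of $f_q(x)$. Grouping by $k$, and using $0^q=0$ together with $q^q\equiv 0\bmod q^2$ to harmlessly extend the range of $k$, yields
\[
f_q(x)\equiv c_q\sum_{t=1}^q\Bigl(\sum_{k=0}^t(-1)^k\binom{t}{k}k^q\Bigr)\frac{x^t}{t}\bmod q^2.
\]
Substituting $x\mapsto -x$ absorbs $(-1)^t$ into the inner signs to produce $(-1)^{t-k}\binom{t}{k}k^q$, which by (1) is the coefficient of $x^t$ in $\mathcal F_q(x)$. Termwise integration $\int_0^x\mathcal F_q(y)\,dy/y$ then supplies the $1/t$ and matches both sides. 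The main obstacle is Part (2): the Fermat-quotient identity demands careful mod-$q^2$ bookkeeping, while Parts (1) and (3) become formal manipulations once it is in hand.
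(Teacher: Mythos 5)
Your proposal is correct, and its overall architecture matches the paper's proof: dispose of $q\mid k$, parametrize $l$ by $j$ via the floor function, and reduce part (2) to a Fermat-quotient congruence; then part (3) is the formal substitution you describe. There are two worthwhile differences. For (1), the paper argues in one line that both coefficients count surjections from $\{1,\dots,s\}$ onto $\{1,\dots,t\}$, whereas you expand $(e^y-1)^t$ in the generating series from \cref{FLi}; both are fine and essentially equivalent. For (2), the paper reduces to the congruence $\frac{k^q-k}{q}\equiv\sum_{j=1}^{q-1}\frac{1}{j}\bigl[\frac{jk}{q}\bigr]\bmod q$ and simply cites Lerch, while you prove that congruence from scratch via the mod-$q^2$ expansion of $\prod_{j}(jk)=(q-1)!\,k^{q-1}$; this makes your argument self-contained at the cost of a little extra bookkeeping (the paper also routes through $[\frac{j^*k^q}{q}]=[\frac{j^*k}{q}]+\frac{k^q-k}{q}j^*$, which you avoid by working with $k$ directly and using $c_q\equiv-1\bmod q$ — a slightly cleaner reduction). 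One small point you should make explicit in (3): the term $t=q$ carries the factor $\frac{1}{t}=\frac{1}{q}$, so substituting the mod-$q^2$ congruence of (2) there, and discarding the added $k=q$ term, is legitimate only because $q\mid\binom{q}{k}$ for $1\le k\le q-1$ and $q^{q-1}\equiv 0\bmod q^2$ for $q\ge 3$; this is harmless but needed for the congruence to survive the division by $q$.
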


\begin{proof} 
(1) It is clear since the coefficients of $x^t$ on both sides count the same number of
surjective maps of $\{1,\dots,s\}$ to $\{1,\dots,t\}$.

\medskip

\noindent
(2) When $q \mid k$, it is trivial since both sides are congruent to $0$.
We assume that $q \nmid k$.
The condition $jl\equiv k \mod q$ holds if and only if $l\equiv j^*k^q \mod q$.
Hence, under the assumption $1\leq l \leq q-1$, such $l$ is unique and given by $l=q(\frac{j^*k^q}{q}-[\frac{j^*k^q}{q}])$,
where we denote the integer part of $a \in \QQ$ by $[a]$.
Therefore we have
\begin{align} \label{ip1}
\sum_{\substack{1\leq j,l \leq q-1 \\ jl\equiv k \mod q}} jl - k^q \sum_{j=1}^{q-1} j j^*=-q\sum_{j=1}^{q-1}j\left[\frac{j^*k^q}{q}\right].
\end{align}
On the other hand, by noting that $[\frac{j^*k^q}{q}]= [\frac{j^*}{q}(k+k^q-k) ]= [\frac{j^*k}{q}]+\frac{k^q-k}{q}\cdot j^*$, we have
\begin{align} \label{ip2}
\sum_{j=1}^{q-1}j\left[\frac{j^*k^q}{q}\right]=\sum_{j=1}^{q-1} j \left[\frac{j^*k}{q}\right]+\frac{k^q-k}{q} \sum_{j=1}^{q-1} jj^*
\equiv \sum_{j=1}^{q-1} \frac{1}{j^*} \left[\frac{j^*k}{q}\right]-\frac{k^q-k}{q} \mod q.
\end{align}
Then the assertion follows from \cref{ip1,ip2} and Lerch's congruence \cite[($8^*$)]{Ler} 
\begin{align*} 
\frac{k^q-k}{q} \equiv \sum_{j=1}^{q-1} \frac{1}{j} \left[\frac{jk}{q}\right] \mod q.
\end{align*}

\noindent
(3) follows immediately from (1), (2), and the definition of $f_q(x)$.
\end{proof}

\begin{thm} \label{pl}
\begin{enumerate}
\item We have
\begin{align*}
f_q(x) \equiv c_q \mathrm{Li}_{1-q}\left(\frac{-x}{1-x}\right) \mod q^2,
\end{align*}
where $c_q=\sum_{j=1}^{q-1} j j^*$.
\item Let $m,n \in \ZZ$, and assume that $m-n \not \equiv 0 \mod q$.
The following are equivalent.
\begin{enumerate}
\item $\mathrm{Li}_{1-q}(\frac{n}{m}) \in q^2\ZZ_{(q)}$.
\item $f_q(\frac{-n}{m-n}) \in q^2\ZZ_{(q)}$.
\end{enumerate}
\item We have 
\begin{align} 
\mathrm{Li}_{1-q}(x^{-1})&=-\mathrm{Li}_{1-q}(x), \label{symofLi} \\
f_q(1-x)&\equiv -f_q(x) \mod q^2. \label{sym}
\end{align}
In particular we obtain
\begin{enumerate}
\item $\mathrm{Li}_{1-q}(b) \in q^2\ZZ_{(q)}$ if and only if $\mathrm{Li}_{1-q}(b^{-1}) \in q^2\ZZ_{(q)}$ for $b \in \QQ$ with $b \neq 0,1$.
\item $b \bmod q^2 \in S_q$ if and only if $1-b \bmod q^2 \in S_q$ for $b \in \ZZ_{(q)}$.
\item $\mathrm{Li}_{1-q}(\frac{n}{m}) \in q^2\ZZ_{(q)}$ when $\frac{n}{m} \in q^2\ZZ_{(q)}$, $\frac{n}{m} \in -1+q^2\ZZ_{(q)}$ or $\frac{m}{n} \in q^2\ZZ_{(q)}$.
\item $0,\frac{1}{2},1 \bmod q^2 \in S_q$.
\end{enumerate}
\end{enumerate}
\end{thm}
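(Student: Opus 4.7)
The plan is to derive \cref{pl}(1) by combining \cref{tipsforpl}(3) with \cref{FLi}, to deduce (2) from (1) by a change of variables, and to establish (3) from the standard functional equation of $\mathrm{Li}_{1-q}$ together with (1).

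For part (1), I begin with $f_q(-x) \equiv c_q \int_0^x \mathcal{F}_q(x)\,\frac{dx}{x} \pmod{q^2}$ from \cref{tipsforpl}(3) and substitute $\mathcal{F}_q(x) = \frac{1}{1+x}\mathrm{Li}_{-q}(\frac{x}{1+x})$ from \cref{FLi}. The change of variables $u = \frac{x}{1+x}$ satisfies $\frac{du}{u} = \frac{dx}{x(1+x)}$, and from the definition $\mathrm{Li}_{-q}(u) = u\frac{d}{du}\mathrm{Li}_{1-q}(u)$ one obtains $\mathrm{Li}_{-q}(u)\,\frac{du}{u} = d\mathrm{Li}_{1-q}(u)$. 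Using $\mathrm{Li}_{1-q}(0)=0$, the integral evaluates to $\mathrm{Li}_{1-q}(\frac{x}{1+x})$, and replacing $x$ by $-x$ yields (1). Part (2) then follows immediately by substituting $x=\frac{-n}{m-n}$ (so that $\frac{-x}{1-x}=\frac{n}{m}$), upon noting that $c_q\equiv q-1\equiv -1\pmod q$ (since $jj^*\equiv 1\pmod q$) is a unit in $\ZZ_{(q)}$.

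For part (3), the functional equation (\ref{symofLi}) is classical: substituting $x\mapsto x^{-1}$ in the generating series $\sum_s \mathrm{Li}_{-s}(x)\,\frac{y^s}{s!}=\frac{xe^y}{1-xe^y}$ of \cref{FLi} and matching with the substitution $y\mapsto -y$ yields $\mathrm{Li}_{-s}(x^{-1})=(-1)^{s+1}\mathrm{Li}_{-s}(x)$ for $s\geq 1$; taking $s=q-1$ with $q$ odd gives (\ref{symofLi}). For (\ref{sym}), the substitution $x\mapsto 1-x$ sends $y=\frac{-x}{1-x}$ to $y^{-1}$, so (1) together with (\ref{symofLi}) yields $f_q(1-x)\equiv -f_q(x)\pmod{q^2}$. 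The corollary (a) is immediate from (\ref{symofLi}), and (b) follows from (\ref{sym}) combined with \cref{fqgq}(2). For (d): $f_q(0)=0$ gives $0\in S_q$, then $1\in S_q$ by (b), and applying (\ref{sym}) at $x=1/2$ gives $f_q(1/2)\equiv -f_q(1/2)\pmod{q^2}$, hence $f_q(1/2)\in q^2\ZZ_{(q)}$ (as $2\in\ZZ_{(q)}^\times$), so $1/2\in S_q$ via \cref{fqgq}(2). For (c), writing $\mathrm{Li}_{1-q}(x)=P(x)/(1-x)^q$ with $P\in\ZZ[x]$, the vanishing $\mathrm{Li}_{1-q}(0)=\mathrm{Li}_{1-q}(-1)=0$ (the latter via (\ref{symofLi})) forces $P(x)=x(x+1)Q(x)$ for some $Q\in\ZZ[x]$; the first two cases of (c) then follow by direct evaluation, and the third case reduces to the first via (\ref{symofLi}).

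The main technical hurdle is to verify that the term-by-term integration in part (1) is legitimate modulo $q^2$: one must ensure that the factor $\frac{1}{t}$ arising at $t=q$ is absorbed by the divisibility already implicit in the definition of $f_q(x)\in\ZZ_{(q)}[x]$, as established in the proof of \cref{tipsforpl}(3). Beyond this bookkeeping, the argument is a clean chain of substitutions using only \cref{FLi}, \cref{tipsforpl}, \cref{fqgq}, and the classical functional equation for $\mathrm{Li}_{1-q}$.
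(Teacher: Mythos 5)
Your proof is correct and follows essentially the same route as the paper's: part (1) via \cref{FLi} and \cref{tipsforpl}-(3) with the substitution $u=x/(1+x)$, part (2) from the invertibility of $c_q$ modulo $q$, and part (3) from the symmetry of the generating series \eqref{gsofLi}. The only immaterial deviations are that you prove (3)-(d) directly from \eqref{sym} rather than via (3)-(c) and (2), and that you make the factorization of the numerator of $\mathrm{Li}_{1-q}$ explicit in (3)-(c), which if anything fills in a detail the paper leaves terse.
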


\begin{proof}
(1) From \cref{FLi} we obtain
\begin{align*}
\int_0^x \mathcal F_q(x) \frac{dx}{x}=\int_0^x \mathrm{Li}_{-q}\left(\frac{x}{1+x}\right) \frac{dx}{x(1+x)}
=\int_0^{\frac{x}{1+x}} \mathrm{Li}_{-q}(z) \frac{dz}{z}=\mathrm{Li}_{1-q}\left(\frac{x}{1+x}\right).
\end{align*}
Then the assertion follows by combining it with \cref{tipsforpl}-(3).

\medskip

\noindent
(2) follows from (1) and $c_q \equiv q-1 \not \equiv 0 \mod q$.

\medskip

\noindent
(3) The generating series \cref{gsofLi} of $\mathrm{Li}_{-s}(x)$ satisfies the following symmetry.
\begin{align*}
\frac{x^{-1}e^{-y}}{1-x^{-1}e^{-y}}=\frac{-1}{1-xe^y}=-1-\frac{xe^y}{1-xe^y}.
\end{align*}
It implies the functional equation (\ref{symofLi}).
\cref{sym} follows from this and (1).
Then (a) and (b) follow immediately.
For (c), the case $\frac{n}{m} \in q^2\ZZ_{(q)}$ follows from $x \mid \mathrm{Li}_{q-1}(x)$.
Hence the case $\frac{m}{n} \in q^2\ZZ_{(q)}$ follows by (a).
The case $\frac{n}{m} \in -1+q^2\ZZ_{(q)}$ follows by substituting $x=-1$ in \cref{symofLi}.
(d) follows from (c) and (2).
\end{proof}

\begin{rmk} \label{finrmk}
If we exchange $m$ and $n$ then $\frac{-n}{m-n}$ becomes $1-\frac{-n}{m-n}$, although $p=\Phi_q(n/m)m^{q-1}$ remains unchanged.
The assertions of \cref{pl}-(3) correspond to this symmetry. 
\end{rmk}

\bibliographystyle{alpha} 
\bibliography{reclaw} 

\end{document}